\documentclass[11pt,a4paper]{amsart}
\usepackage[T1]{fontenc}
\usepackage[utf8]{inputenc}
\usepackage{lmodern}
\usepackage[a4paper,margin=28mm]{geometry}
\usepackage{amsmath,amssymb,mathtools,amsthm}
\usepackage{microtype}
\usepackage{enumitem}
\usepackage{tikz-cd}
\usepackage[hidelinks]{hyperref}
\usepackage{amsthm}
\newtheorem*{introthmnonum}{Theorem}
\hypersetup{pdftitle={Relative derived delooping levels on intervals of torsion classes},
pdfsubject={Version 7 with unnumbered displayed equations}}
\newtheorem{theorem}{Theorem}[section]
\newtheorem{proposition}[theorem]{Proposition}
\newtheorem{lemma}[theorem]{Lemma}
\newtheorem{corollary}[theorem]{Corollary}

\theoremstyle{definition}
\newtheorem{definition}[theorem]{Definition}
\newtheorem{example}[theorem]{Example}
\theoremstyle{remark}
\newtheorem{remark}[theorem]{Remark}
\DeclareMathOperator{\Fac}{Fac}
\DeclareMathOperator{\Sub}{Sub}
\DeclareMathOperator{\add}{add}
\DeclareMathOperator{\End}{End}
\DeclareMathOperator{\Hom}{Hom}
\DeclareMathOperator{\Ext}{Ext}
\DeclareMathOperator{\rad}{rad}
\DeclareMathOperator{\topm}{top}
\DeclareMathOperator{\tr}{tr}

\DeclareMathOperator{\Findim}{Findim}
\DeclareMathOperator{\findim}{findim}
\DeclareMathOperator{\dell}{dell}
\DeclareMathOperator{\ddell}{ddell}
\DeclareMathOperator{\Proj}{Proj}

\newcommand{\modu}{\operatorname{mod}}
\newcommand{\Modu}{\operatorname{Mod}}
\newcommand{\E}{\mathcal E}
\newcommand{\C}{\mathcal C}
\newcommand{\st}[1]{\underline{#1}}
\newcommand{\op}{\mathrm{op}}

\newcommand{\kk}{K}
\newcommand{\into}{\lhook\joinrel\longrightarrow}
\newcommand{\onto}{\twoheadrightarrow}
\setlist[enumerate]{label=\textup{(\arabic*)},leftmargin=*,itemsep=3pt}
\title[Derived delooping on torsion intervals]{Relative derived delooping levels and torsion intervals}

\author[W. Cao]{Weiqing Cao}
\address{School of Mathematics and Statistics, Jiangsu Normal University,
Xuzhou 221116, Jiangsu, P. R. China}
\email{weiqingcao@jsnu.edu.cn}

\date{}
\makeatletter
\@namedef{subjclassname@2020}{\textup{2020} Mathematics Subject Classification}
\makeatother
\subjclass[2020]{16E05, 16G10}
\keywords{Torsion interval, exact category, relative derived delooping level, support $\tau$-tilting module, finitistic dimension}

\begin{document}
\begin{abstract}
Let $U$  be  a support $\tau$-tilting module over a finite dimensional algebra $A$, and let $\mathcal{V}\subseteq \text{Fac}U$ be any torsion class. The heart
 $\mathcal H=\Fac U\cap \mathcal{V}^\perp$ inherits the exact structure from mod$A$ and admits an Ext-progenerator $P$.  Let $B=\End_A(P)$.  We define the relative derived delooping level on the exact heart
$\mathcal H$ and compare it with the relative delooping level on
$\mathcal H$ and the derived delooping level of $B$. We show that
$\Findim B^{\op}$ is bounded by the relative derived delooping level on the
$\mathcal H$. For a wide heart, the relative derived delooping level on the heart equals
the derived delooping level of $B$. We obtain explicit bounds from representation-finite relative
syzygy strata and characterize global first-order finiteness using an Ext-injective
cogenerator.
\end{abstract}

\maketitle

\section{Introduction}
The finitistic dimension conjecture is one of the central open problems in
the representation theory of Artin algebras. It asks whether the projective dimensions of finitely generated modules of
finite projective dimension admit a common finite bound. The delooping level
introduced by G\'elinas \cite{Gelinas} measures when a syzygy becomes a stable
summand of a higher syzygy and bounds the opposite big finitistic dimension. Guo
and Igusa \cite{GI} introduced derived delooping levels and proved, for every finite dimensional algebra $\Lambda$,
$\Findim\Lambda^{\op}\le\ddell\Lambda\le\dell\Lambda$. Barrios, Lanzilotta and Mata introduced \cite{BLM} global delooping levels and studied their relations with other homological dimensions.
Chen \cite{Chen} showed that finiteness of the ordinary delooping
level is not preserved by derived equivalences. Wei, Wu and
Cao \cite{WeiWuCao} investigated comparisons for derived delooping with a
shift in the delooping degree.

Xu and Zhang \cite{XZ} defined relative delooping levels on $\Fac U$ for a support
$\tau$-tilting module $U$. Their construction uses the Ext-progenerator $U$ and
the tilting equivalence between $\Fac U$ and a torsion free class over $\End_A U$. Their subsequent work \cite{XZ2} proves a comparison with one extra loop and constructs arbitrary large gaps between ordinary and relative levels.
Gao, Liu and Zhang \cite{GLZ} developed derived delooping in exact categories and
applied it to $\Fac U$. Cao, Xu and Zhang \cite{CXZ} extended the relative
delooping level in \cite{XZ} to hearts of torsion intervals with functorially finite
upper endpoint and arbitrary lower torsion class. Motivated by these results,
we extend the derived finite-test construction from
$\operatorname{Fac}U$ to hearts of torsion intervals
with functorially finite endpoints.

Let $A$ be basic and let $T,U\in s\tau\text{-}\operatorname{tilt}A$ satisfy
$\Fac T\subseteq\Fac U$. We abbreviate this relation by $T\le U$. The heart
\[
\mathcal H_{T,U}=\Fac U\cap T^\perp
\]
is extension closed and inherits an exact structure from $\modu A$; cokernels of
arbitrary monomorphisms between its objects need not remain in the heart. By
Enomoto--Sakai \cite[Corollary 4.19]{ES}, $U/\tr_TU$ is an Ext-progenerator of the heart.
Choose a basic module $P=P_{T,U}$ with the same additive closure and put
\[
B=\End_A(P),\qquad F=\Hom_A(P,-).
\]
More generally, if $\mathcal V\subseteq\Fac U$
is any torsion class with torsion radical $t_{\mathcal V}$, then
$U/t_{\mathcal V}(U)$ is an Ext-progenerator of
$\Fac U\cap\mathcal V^\perp$ by
\cite[Lemma~4.17]{ES}. \cite[Theorem 2.7]{CXZ}
proves the same realization for this larger class of intervals.

For $X\in\mathcal H_{T,U}$ and $k\geq1$, write $k\text{-}\ddell_{[T,U]}X$ and
$k\text{-}\dell_{[T,U]}X$ for the relative derived and ordinary levels,
respectively. They are computed using the relative projectives and admissibly
exact witnesses of the heart. The global relative level is
\[
\operatorname{glo}\text{-}k\text{-}\ddell[T,U]
=\sup_{X\in\mathcal H_{T,U}}k\text{-}\ddell_{[T,U]}X.
\]
To control all simple modules of $B$, write $Q_i=F(P_i)$, $P_i\in\mathrm{add}P$, for its indecomposable
projectives and $L_i=\topm Q_i$. For $L_i$ in the image, choose $S_i$ with
$F(S_i)\cong L_i$. For the remaining indices, choose $N_i$ with
$F(N_i)\cong\rad Q_i$. Put
\[
k\text{-}\ddell^{\mathrm{XZ}}[T,U]
=\max\left\{
\begin{array}{l}
\displaystyle\sup_{i\in I}k\text{-}\ddell_{[T,U]}S_i,\\[3pt]
\displaystyle\sup_{j\notin I}\bigl(1+(k+1)\text{-}\ddell_{[T,U]}N_j\bigr)
\end{array}\right\},
\]
where $I$ is the set of simple-image indices (see Definition \ref{def:tests}). Its ordinary counterpart is denoted
$k\text{-}\dell^{\mathrm{XZ}}[T,U]$ and agrees with the $Xu$--$Zhang$ finite-test
invariant of \cite{CXZ}.
The radical correction terms account for the simple
$B$-modules outside $\operatorname{Im}F$.
We compare $k\text{-}\ddell^{\mathrm{XZ}}[T,U]$ with $k\text{-}\dell^{\mathrm{XZ}}[T,U]$ and $k\text{-}\ddell B$, together
with an upper bound for $\operatorname{Findim}B^{\mathrm{op}}$.

\begin{introthmnonum}\label{thm:intro-tests}
\textup{(See Theorems~\ref{thm:canonical} and \ref{thm:wide}.)}
For every $k\ge1$,
\begin{align*}
k\text{-}\ddell B
&\le k\text{-}\ddell^{\mathrm{XZ}}[T,U]
\le k\text{-}\dell^{\mathrm{XZ}}[T,U],\\
k\text{-}\ddell^{\mathrm{XZ}}[T,U]
&\le\operatorname{glo}\text{-}(k+1)\text{-}\ddell[T,U]+1,\\
k\text{-}\ddell^{\mathrm{XZ}}[T,U]
&\le1+(k+1)\text{-}\ddell B.
\end{align*}
Consequently, $\Findim B^{\op}\le\ddell^{\mathrm{XZ}}[T,U]
\le\operatorname{glo}\text{-}2\text{-}\ddell[T,U]+1$.
If the heart is wide, then
\begin{align*}
k\text{-}\ddell_{[T,U]}X&=k\text{-}\ddell_B F(X),\\
k\text{-}\ddell^{\mathrm{XZ}}[T,U]
&=k\text{-}\ddell B,\\
\operatorname{glo}\text{-}k\text{-}\ddell[T,U]
&=\operatorname{glo}\text{-}k\text{-}\ddell B.
\end{align*}
\end{introthmnonum}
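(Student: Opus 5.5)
The plan is to transport everything through the functor $F=\Hom_A(P,-)$. It restricts to an exact equivalence between $\mathcal H_{T,U}$ and a full, extension-closed subcategory $F(\mathcal H_{T,U})\subseteq\modu B$, and it sends $\add P$ onto $\Proj B$; this is the tilting-type realization of \cite[Theorem 2.7]{CXZ} and \cite[Corollary 4.19]{ES}. Because the heart is exact with enough projectives $\add P$, relative syzygies $\Omega_{\mathcal H}^n X$ are computed from admissible projective resolutions. Since $\Ext^1_A(P,\mathcal H)=0$, applying $F$ gives exact sequences of $B$-modules, so
\[
F(\Omega^n_{\mathcal H}X)\cong\Omega^n_B F(X)\quad\text{up to projective summands}.
\]
The first step is a transfer lemma. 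A witness for $k\text{-}\ddell_{[T,U]}X\le n$ is a finite admissible complex or extension in the heart realizing $\Omega^n_{\mathcal H}X$ inside the thick closure of the $(n+k)$-th relative syzygies. Under $F$ such a witness becomes a witness for $k\text{-}\ddell_B F(X)\le n$, so
\[
k\text{-}\ddell_B F(X)\le k\text{-}\ddell_{[T,U]}X.
\]
Ordinary stable-summand witnesses are special derived witnesses, which gives $k\text{-}\ddell_{[T,U]}\le k\text{-}\dell_{[T,U]}$ objectwise. Taking the maximum over tests then yields the inequality $k\text{-}\ddell^{\mathrm{XZ}}\le k\text{-}\dell^{\mathrm{XZ}}$.

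For the bound $k\text{-}\ddell B\le k\text{-}\ddell^{\mathrm{XZ}}$ I will reduce, as in Guo--Igusa \cite{GI}, to simple modules: $k\text{-}\ddell B=\max_i k\text{-}\ddell L_i$. There are two cases.
\begin{itemize}
\item For $i\in I$ we have $L_i\cong F(S_i)$, and the transfer lemma applies directly.
\item For $j\notin I$ the short exact sequence $0\to\rad Q_j\to Q_j\to L_j\to0$ gives $\Omega_B L_j=\rad Q_j=F(N_j)$. The standard shift lemma ($k\text{-}\ddell M\le 1+(k+1)\text{-}\ddell\,\Omega M$, from \cite{GI}/\cite{GLZ}) combined with transfer then bounds $k\text{-}\ddell L_j$ by $1+(k+1)\text{-}\ddell_{[T,U]}N_j$.
\end{itemize}

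The global bound $k\text{-}\ddell^{\mathrm{XZ}}\le\operatorname{glo}\text{-}(k+1)\text{-}\ddell+1$ needs a monotonicity step. I will show $k\text{-}\ddell_{[T,U]}X\le(k+1)\text{-}\ddell_{[T,U]}X+1$, or more simply $k\text{-}\ddell\le(k+1)\text{-}\ddell$, since $(n+k+1)$-syzygies lie in the thick closure of $(n+k)$-syzygies after one more loop. Then each term is at most $\operatorname{glo}+1$.

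The bound $k\text{-}\ddell^{\mathrm{XZ}}\le1+(k+1)\text{-}\ddell B$ is the step I expect to be the main obstacle. It requires lifting $B$-witnesses back to the heart. The approach I will try is the following: every $B$-syzygy $\Omega_B M$ with $n\ge1$ lies in $F(\mathcal H)$, since it is a submodule of an object of $\add F(P)$ and hence lies in the torsion-free class. So after one loop all $B$-side witnesses live in the image, and $F^{-1}$ pulls them back along exact sequences. Checking admissibility of these sequences is the delicate point, and it costs exactly the extra $+1$ and $k\mapsto k+1$.

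Finally, $\Findim B^{\op}\le\ddell B\le\ddell^{\mathrm{XZ}}$ follows from \cite{GI}, and the displayed chain follows with $k=1$. In the wide case $F(\mathcal H)$ is closed under kernels and cokernels, so every short exact sequence of $B$-modules between objects of $F(\mathcal H)$ is admissible. Both directions of transfer then hold with no shift, which gives all three equalities.
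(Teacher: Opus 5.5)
Most of your proposal follows the paper's route. This covers the first chain, the global bound, the $\Findim$ consequence and the wide case. You push an admissibly exact witness forward along $F$ using $(k+i)\text{-}\dell_BF(C_i)\le(k+i)\text{-}\dell_{[T,U]}C_i$. You treat $j\notin I$ via $k\text{-}\ddell_BL_j\le1+(k+1)\text{-}\ddell_B\Omega_BL_j$ with $\Omega_BL_j\cong F(N_j)$. Monotonicity in $k$ holds simply because $\Omega^{a+k+1}Y=\Omega^{a+k}(\Omega Y)$.

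Two small corrections. First, a witness is not a ``thick closure'' condition. It is an admissibly exact sequence $0\to C_t\to\cdots\to C_0\to X\to0$ with $t\le m$ and $(k+i)\text{-}\dell C_i\le m-i$, and your transfer works for exactly this definition. Second, in the wide case the point is not admissibility of sequences, which is already automatic because $\operatorname{Im}F$ is extension closed. The point is that closure under cokernels together with $\operatorname{proj}B\subseteq\operatorname{Im}F$ gives $\operatorname{Im}F=\modu B$. Then every $L_i$ lies in the image and the radical terms disappear.

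The genuine gap is the inequality $k\text{-}\ddell^{\mathrm{XZ}}[T,U]\le1+(k+1)\text{-}\ddell B$, which you only sketch. Knowing $\Omega_B(\modu B)\subseteq\operatorname{Im}F$ does not by itself move a $B$-witness $0\to C_t\to\cdots\to C_0\to M\to0$ into the image. Its terms, its intermediate kernels, and the auxiliary modules $N$ in the stable retractions $\Omega_B^aC_i\rightleftarrows\Omega_B^{a+k+i}N$ are arbitrary $B$-modules, and looping only $M$ does nothing to them. The missing step is to loop the whole witness:
\begin{itemize}
\item The horseshoe lemma (Lemma~\ref{lem:horseshoe} applied in $\modu B$) gives an exact sequence $0\to C_t'\to\cdots\to C_0'\to\Omega_BM\to0$ with $C_i'\simeq\Omega_BC_i$ stably. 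All its terms and kernels are submodules of projectives, so they lie in the submodule-closed image and pull back to an admissibly exact sequence in the heart.
\item Applying $\Omega_B$ to each stable retraction replaces $N$ by $\Omega_BN\in\operatorname{Im}F$. This bounds the relative $(k+i)$-levels of the lifted terms by $(k+i)\text{-}\dell_BC_i$.
\end{itemize}
Together these give Lemma~\ref{lem:lift-omega}: $k\text{-}\ddell_{[T,U]}X_M\le k\text{-}\ddell_BM$ whenever $F(X_M)\cong\Omega_BM$, with no loss at all.

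So your claim that the lift ``costs exactly the extra $+1$ and $k\mapsto k+1$'' is misplaced, and this matters for the bound.
\begin{itemize}
\item For $i\in I$, the $+1$ and the shift come from splicing, not from the lift. Apply the lemma to $M=L_i$ at level $k+1$ to get a witness for $\Omega_{T,U}S_i$. Then splice it with the relative projective cover $0\to\Omega_{T,U}S_i\to P_{S_i}\to S_i\to0$.
\item For $j\notin I$, you must apply the lemma to $M=L_j$ itself. This gives $(k+1)\text{-}\ddell_{[T,U]}N_j\le(k+1)\text{-}\ddell_BL_j$ at no cost.
\end{itemize}
If you instead lift a witness of $F(N_j)$ with the uniform $+1$ and shift, you only get
\[
(k+1)\text{-}\ddell_{[T,U]}N_j\le1+(k+2)\text{-}\ddell_B\Omega_BL_j\le1+(k+1)\text{-}\ddell_BL_j .
\]
The radical tests would then be bounded only by $2+(k+1)\text{-}\ddell B$, one more than the statement claims.
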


The radical contribution has both a shifted order and an additive correction. Example~\ref{ex:sharp} proves that the  additive constant one
in the global upper bound cannot be removed,
even for a connected hereditary algebra with nonzero lower endpoint.

For $s\ge0$, let
\[
\C_s=\add\{\Omega_{T,U}^sX\oplus P'\mid X\in\mathcal H_{T,U},\ P'\in\add P\}.
\]
These are the relative syzygy strata used in \cite{CXZ}. Their finite
stabilization bounds the derived invariants through the ordinary relative bounds.

Suppose that $\C_s$ is representation finite. Let $n$ be the stabilization index
supplied by Lemma~\ref{lem:stabilization}. We show that  for every $k\ge1$ (See Corollary~\ref{prop:finite-stratum}),
\[
k\text{-}\ddell B\le k\text{-}\ddell^{\mathrm{XZ}}[T,U]
\le n+1.
\]
We also construct an Ext-injective cogenerator of
the heart and apply \cite[Theorem 4.14]{GLZ} to characterize finiteness of
$\operatorname{glo}\text{-}\ddell[T,U]$.
Moreover, if $\Ext_A^2(X,Y)=0$ for all $X,Y\in\mathcal H_{T,U}$, then $\operatorname{glo}\text{-}k\text{-}\ddell[T,U]\le1$ (see Proposition \ref{prop:second-ext}).

The paper is organized as follows. Section~\ref{sec:preliminaries} recalls torsion
intervals and the exact realization of their exact hearts.
Section~\ref{sec:derived} defines relative invariants and
establishes their comparisons. Section~\ref{sec:finiteness} treats wide intervals
and the ICE first stratum, constructs Ext-injective cogenerators and gives their
derived-level consequences. Section~\ref{sec:examples} gives some examples.

\section{Preliminaries and interval hearts}\label{sec:preliminaries}

All modules are finitely generated right modules unless $\Modu$ is written.
All full subcategories are closed under isomorphisms, finite direct sums and
direct summands. The symbol $D$ denotes $\Hom_\kk(-,\kk)$. Multiplication in an
endomorphism algebra is composition; precomposition gives $\Hom_A(P,X)$ its
right $\End_A(P)$-module structure. Suprema of empty families of nonnegative
invariants are zero; infima of empty admissible sets are infinity. These
conventions include a zero heart and its zero endomorphism algebra.

\subsection{Torsion intervals}

A torsion class in $\modu A$ is a full subcategory closed under factor modules
and extensions. Write $\operatorname{tors}A$  for their poset under inclusion and
$s\tau\text{-}\operatorname{tilt}A$ for the basic support $\tau$-tilting modules
up to isomorphism. For $T,U\in s\tau\text{-}\operatorname{tilt}A$  with $\Fac T\subseteq\Fac U$, write
$T\le U$ and consider the interval $[\Fac T,\Fac U]$. Its heart is
\[
\mathcal H_{T,U}=\Fac U\cap(\Fac T)^\perp
=\Fac U\cap T^\perp,
\]
where $T^\perp=\{X\mid\Hom_A(T,X)=0\}$.
The second equality follows because objects of $\Fac T$ are quotients of finite
sums of $T$. The heart is extension closed and idempotent complete. Its inherited
conflations are the short exact sequences in $\modu A$ whose three terms belong
to the heart. A wide subcategory is closed under kernels, cokernels and extensions
in $\modu A$.

For modules $M,X$, the trace is
\[
\tr_M(X)=\sum_{f\in\Hom_A(M,X)}\operatorname{Im}f.
\]
For the support $\tau$-tilting module $T$, this is the torsion radical of $\Fac T$
evaluated at $X$.

\begin{theorem}\label{thm:ext-progenerator}\cite{ES}
For $T\le U$, the module $R=U/\tr_TU$ is an Ext-progenerator of
$\mathcal H_{T,U}$. If $P=P_{T,U}$ is basic with $\add P=\add R$, then
\[
\Proj\mathcal H_{T,U}=\add P.
\]
For every $X\in\mathcal H_{T,U}$, there is an exact sequence
\[
0\to K_X\to P_X\to X\to0,
\]
where  $P_X\in\add P$ and $K_X\in\mathcal H_{T,U}$.
\end{theorem}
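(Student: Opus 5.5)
We prove the three assertions in order: $R$ lies in the heart, $R$ is Ext-projective there, and every object of the heart admits an $R$-cover with kernel again in the heart. The statement $\Proj\mathcal H_{T,U}=\add P$ then follows formally.

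\emph{Step 1: $R\in\mathcal H_{T,U}$.} Since $U\in\Fac U$ and $\Fac U$ is closed under quotients, $R=U/\tr_TU\in\Fac U$. Because $\tr_T$ is the torsion radical of $\Fac T$, the quotient $U/\tr_TU$ is torsion free for $(\Fac T,T^\perp)$. Hence $\Hom_A(T,R)=0$ and $R\in T^\perp$.

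\emph{Step 2: Ext-projectivity.} Let $X\in\mathcal H_{T,U}$. Apply $\Hom_A(-,X)$ to $0\to\tr_TU\to U\to R\to0$ to get
\[
\Hom_A(\tr_TU,X)\to\Ext^1_A(R,X)\to\Ext^1_A(U,X).
\]
The right term vanishes, since $U$ is Ext-projective in $\Fac U$: by the Auslander--Smalø characterization of $\tau$-rigid modules, $\Ext^1_A(U,\Fac U)=0$. The left term vanishes because $\tr_TU\in\Fac T$ and $X\in T^\perp=(\Fac T)^\perp$. Hence $\Ext^1_A(R,X)=0$, so $\add R\subseteq\Proj\mathcal H_{T,U}$.

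\emph{Step 3: approximation sequences.} Take $X\in\mathcal H_{T,U}$. Since $X\in\Fac U$ and $U$ is an Ext-progenerator of $\Fac U$, there is an exact sequence $0\to K'\to U_X\to X\to0$ with $U_X\in\add U$ and $K'\in\Fac U$. Concretely, $U_X\to X$ is a right $\add U$-approximation, and $K'\in\Fac U$ follows from the standard argument via the Bongartz completion, as in Adachi--Iyama--Reiten. Because $X\in T^\perp$, the map $U_X\to X$ kills $\tr_T U_X$, so it factors through $R_X:=U_X/\tr_TU_X\in\add R$. The induced map $R_X\to X$ is still surjective.

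Let $K_X$ be its kernel. Applying the snake lemma to the map of short exact sequences gives
\[
0\to\tr_TU_X\to K'\to K_X\to0.
\]
Thus $K_X$ is a quotient of $K'\in\Fac U$, so $K_X\in\Fac U$.

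The main obstacle is showing $K_X\in T^\perp$. The plan is to choose $U_X\to X$ to be a minimal (or simply any) right $\add U$-approximation and argue as follows. Any map $T\to K_X$ composes to a map $T\to R_X$, and this composite is zero because $R_X\in T^\perp$. Hence $\Hom_A(T,K_X)=0$ follows from left exactness: $K_X\subseteq R_X$ is a submodule of a module in $T^\perp$, and $T^\perp$ is closed under submodules. So in fact $K_X\in T^\perp$ immediately, and the real work is only $K_X\in\Fac U$, which the snake lemma argument above settles.

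\emph{Step 4: $\Proj\mathcal H_{T,U}=\add P$.} If $X$ is Ext-projective in the heart, then the sequence $0\to K_X\to R_X\to X\to0$ lies in $\mathcal H_{T,U}$ and therefore splits. So $X$ is a summand of $R_X$, hence $X\in\add R=\add P$. Combined with Step 2, this gives $\Proj\mathcal H_{T,U}=\add P$, with $P_X=R_X$ serving in the stated sequence.

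Alternatively, all four steps can be cited directly from \cite[Lemma 4.17, Corollary 4.19]{ES}.
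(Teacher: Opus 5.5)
Your proof is correct. The paper gives no argument of its own here: it quotes the statement from Enomoto--Sakai \cite{ES} (Lemma 4.17 and Corollary 4.19). So what you supply is a self-contained direct proof in place of a citation.

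Its advantage is that it makes the hypotheses visible. Steps 1--4 use only three facts: $\Fac T$ is a torsion class with torsion radical $\tr_T$, its torsion-free class $T^\perp$ is closed under submodules, and $U$ is an Ext-progenerator of $\Fac U$. Replacing $\tr_T$ by $t_{\mathcal V}$ verbatim therefore proves the general version for $\Fac U\cap\mathcal V^\perp$ with any torsion class $\mathcal V\subseteq\Fac U$, which the paper quotes separately.

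The one external input, $K'\in\Fac U$, deserves a sharper justification than ``the standard argument via the Bongartz completion''. By Adachi--Iyama--Reiten, $U$ is a classical tilting module over $C=A/\operatorname{ann}U$. All modules in Step 3 are $C$-modules, and $\Fac U=\{Y\in\modu C\mid\Ext^1_C(U,Y)=0\}$. Since $U_X\to X$ is a right $\add U$-approximation and $\Ext^1_C(U,U_X)=0$, the long exact sequence for $\Hom_C(U,-)$ gives $\Ext^1_C(U,K')=0$, hence $K'\in\Fac U$.

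Also, the emphasis in your ``main obstacle'' paragraph is reversed. $K_X\in T^\perp$ is immediate from $K_X\subseteq R_X$. The substantive point is $K_X\in\Fac U$, which your snake-lemma sequence $0\to\tr_TU_X\to K'\to K_X\to0$ settles, as you eventually note yourself.
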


Fix this basic $P$, and put $B=\End_A(P)$ and
$F=\Hom_A(P,-)$.

\subsection{Stable categories and interval syzygies}

Let $\E$ be a full extension-closed, summand-closed subcategory of $\modu A$ with
Ext-progenerator $P$, equipped with its inherited exact structure. It is a
Hom-finite, Krull--Schmidt exact category with enough projectives. The
projectively stable category is
\[
\st\E=\E/[\add P].
\]
Thus $X$ is a stable retract of $Y$ if maps $X\rightleftarrows Y$ have composite
$1_X$ modulo maps factoring through $\add P$. Relative syzygies $\Omega_\E$ are
defined by projective deflations. Schanuel's lemma makes their stable classes
independent of the choices, and minimal choices are available.

The exact-category arguments below use the same convention in any Hom-finite,
Krull--Schmidt, idempotent-complete exact category with enough projectives.

For the interval heart $\mathcal H_{T,U}$, we denote this syzygy functor by
$\Omega_{T,U}$. Thus $\Omega_{T,U}$ is $\Omega_{\mathcal H_{T,U}}$, computed with
the projectives $\add P_{T,U}$.

\subsection{Exact realization of interval hearts}

\begin{proposition}\label{prop:yoneda}\cite[Proposition 2.4]{CXZ}
The functor $F=\Hom_A(P,-):\mathcal H_{T,U}\to\modu B$ is exact and fully faithful.
It identifies $\add P$ with $\operatorname{proj}B$, induces a fully faithful stable
functor, and preserves syzygies stably. With minimal projective deflations,
$F(\Omega_{T,U}^sX)\cong\Omega_B^sF(X)$ for all $s\ge0$.
\end{proposition}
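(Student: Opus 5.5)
We prove the assertions of Proposition~\ref{prop:yoneda} in order: exactness, full faithfulness, the identification of projectives, the stable statement, and finally syzygies.

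\emph{Exactness.} A conflation $0\to X\to Y\to Z\to0$ in $\mathcal H_{T,U}$ is a short exact sequence in $\modu A$ with all three terms in the heart. Applying $\Hom_A(P,-)$ gives an exact sequence
\[
0\to FX\to FY\to FZ\to\Ext^1_A(P,X).
\]
By Theorem~\ref{thm:ext-progenerator}, $P$ is Ext-projective in $\mathcal H_{T,U}$, so $\Ext^1_A(P,X)=0$. Hence $F$ is exact.

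\emph{Full faithfulness.} Take $X\in\mathcal H_{T,U}$. By Theorem~\ref{thm:ext-progenerator} there is a conflation $0\to K_X\to P_X\to X\to 0$. Doing the same for $K_X$ gives a presentation
\[
P_1\to P_0\to X\to 0
\]
with $P_0,P_1\in\add P$, exact in $\modu A$. Apply $F$. By exactness of $F$ on the two conflations, $FP_1\to FP_0\to FX\to0$ is exact, so it is a projective presentation over $B$. For $P'\in\add P$ the map $\Hom_A(P',Y)\to\Hom_B(FP',FY)$ is an isomorphism by the standard Yoneda argument, checked on $P'=P$ where both sides equal $FY$. Now apply $\Hom_A(-,Y)$ and $\Hom_B(-,FY)$ to the two presentations. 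Both rows are left exact, and the comparison maps on the terms $P_0$ and $P_1$ are isomorphisms, so the five lemma gives $\Hom_A(X,Y)\cong\Hom_B(FX,FY)$.

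\emph{Projectives.} $F(P)=B_B$, so $F$ identifies $\add P$ with $\operatorname{proj}B$, and indecomposables correspond because $F$ is fully faithful.

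\emph{Stable functor.} Since $F$ sends $[\add P]$ into $[\operatorname{proj}B]$, it induces a functor $\st{\mathcal H}_{T,U}\to\st{\modu}\,B$.

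The main obstacle is faithfulness of this stable functor. Suppose $f\colon X\to Y$ and $F(f)$ factors as $FX\to Q\to FY$ with $Q$ projective over $B$. Write $Q\cong F(P')$ with $P'\in\add P$. By full faithfulness, both maps lift uniquely to $X\to P'\to Y$, and their composite is $f$ because $F$ is faithful. So $f$ factors through $\add P$. Fullness of the stable functor follows from fullness of $F$.

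\emph{Syzygies.} Take a minimal projective deflation $0\to\Omega_{T,U}X\to P_X\to X\to0$. Applying $F$ gives a short exact sequence $0\to F(\Omega_{T,U}X)\to F(P_X)\to FX\to0$ with $F(P_X)$ projective.

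We claim $F(P_X)\to FX$ is a projective cover. Suppose a summand $F(P'')$ of $F(P_X)$ lies in the kernel. By full faithfulness, $P''$ then lies in $\Omega_{T,U}X$ as a summand of $P_X$, which contradicts the minimality of $P_X\to X$.

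Hence $F(\Omega_{T,U}X)\cong\Omega_B FX$. Since $\Omega_{T,U}X\in\mathcal H_{T,U}$, we may iterate, and induction on $s$ gives $F(\Omega^s_{T,U}X)\cong\Omega^s_B F(X)$. Without minimality, the same argument gives these isomorphisms stably, by Schanuel's lemma.
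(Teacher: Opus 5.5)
Your proof is correct. The paper gives no argument of its own here; it imports the statement from [CXZ, Proposition 2.4]. Your route is the standard proof of that cited result:
\begin{enumerate}
\item Ext-projectivity of $P$ gives exactness of $F$ on conflations.
\item Presentations $P_1\to P_0\to X\to 0$ by $\add P$, the Yoneda isomorphism on $\add P$ and a diagram chase give full faithfulness.
\item Lifting factorizations through $F(\add P)=\operatorname{proj}B$ handles the stable functor.
\item Transporting minimality handles the syzygies.
\end{enumerate}
What your self-contained version adds is transparency about the input. You only use that $P$ is an Ext-progenerator of an extension-closed, summand-closed subcategory whose generating deflations have kernels inside it. So the argument applies verbatim to $\Fac U\cap\mathcal V^\perp$ for an arbitrary torsion class $\mathcal V\subseteq\Fac U$, which is the generality the paper later invokes.

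Two places can be tightened.

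First, the syzygy step is cleaner if you say that full faithfulness transports right minimality. Suppose $F(p_X)g=F(p_X)$ with $g\in\End_B(F(P_X))$, and write $g=F(h)$. Then $p_Xh=p_X$, so $h$, and hence $g$, is an automorphism. A right minimal epimorphism from a projective module is a projective cover. This replaces your ``summand lies in the kernel'' phrasing, which silently relies on right minimality being equivalent to having no nonzero direct summand killed by the map.

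Second, ``preserves syzygies stably'' may be read as a natural isomorphism of functors on the stable categories rather than an objectwise one. In that case, add that a morphism $X\to Y$ lifts to the chosen projective deflations. $F$ sends this lift to a lift between the projective presentations of $F(X)$ and $F(Y)$. This gives the required compatibility with morphisms modulo projectives.
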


\begin{proposition}\label{prop:resolving}\cite[Proposition 2.5, Theorem 4.1]{CXZ}
The essential image $\operatorname{Im}F=F(\mathcal H_{T,U})$ is submodule closed
and extension closed in $\modu B$. The functor
$F:\mathcal H_{T,U}\to\operatorname{Im}F$ is an exact equivalence for the
inherited exact structure. Furthermore,
\[
\Proj(\operatorname{Im}F)=\operatorname{proj}B,
\qquad\Omega_B(\modu B)\subseteq\operatorname{Im}F.
\]
In particular, if the heart $\mathcal H_{T,U}$ is wide, then $F$ is an exact equivalence.
\end{proposition}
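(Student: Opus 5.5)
The plan is to build everything on Proposition~\ref{prop:yoneda}, so the new content is the claim that $\operatorname{Im}F$ is submodule closed and extension closed, together with the identification of projectives and the inclusion $\Omega_B(\modu B)\subseteq\operatorname{Im}F$. Full faithfulness and exactness are already given by Proposition~\ref{prop:yoneda}. The exact-equivalence statement then reduces to one check: a short exact sequence in $\modu B$ whose terms lie in $\operatorname{Im}F$ must come from a conflation in $\mathcal H_{T,U}$.

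\textbf{Step 1: extension closure and the exact equivalence.} Take $0\to F(X)\to E\to F(Z)\to0$ in $\modu B$. Since $F$ is exact and fully faithful, and $\Ext^1_{\mathcal H}(Z,X)\cong\Ext^1_A(Z,X)$ is computed from a projective presentation $0\to K_Z\to P_Z\to Z\to0$ of Theorem~\ref{thm:ext-progenerator}, I would compare the two long exact sequences. The sequence in $\mathcal H$ is obtained by applying $\Hom(-,X)$. The sequence in $\modu B$ is obtained by applying $\Hom_B(-,F(X))$ to $0\to F(K_Z)\to F(P_Z)\to F(Z)\to0$, in which $F(P_Z)$ is projective. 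This gives a natural isomorphism $\Ext^1_{\mathcal H}(Z,X)\cong\Ext^1_B(F(Z),F(X))$, induced by $F$. Hence every extension $E$ is isomorphic to $F$ of the corresponding conflation, which proves both extension closure and that conflations reflect. In particular $F:\mathcal H\to\operatorname{Im}F$ is an exact equivalence, and its projectives are $F(\add P)=\operatorname{proj}B$.

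\textbf{Step 2: submodule closure.} This is the main obstacle. Let $M\subseteq F(X)$ be a submodule. Take a projective cover $F(P')\to M$ and compose it with the inclusion to get a map $F(P')\to F(X)$, which equals $F(g)$ for some $g:P'\to X$. I would set $Y=\operatorname{Im}g\subseteq X$. Since $\mathcal H$ is a torsion-free-type class intersected with $\Fac U$, the module $Y$ lies in $\Fac U$ (as a quotient of $P'\in\Fac U$) and in $T^\perp$ (as a submodule of $X$). Therefore $Y\in\mathcal H$.

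It remains to show $F(Y)=M$. Left exactness gives $F(Y)\subseteq F(X)$ with $M\subseteq F(Y)$. For the reverse inclusion, I would try to show that $F(P')\to F(Y)$ is surjective. This needs $\ker g\in\mathcal H$, or at least $\Ext^1_A(P,\ker g)=0$, and this is where care is needed. First try: replace $g$ by a right $\add P$-approximation-style argument using that $P$ is Ext-projective in $\Fac U$. If that fails, I would cite \cite[Proposition 2.5]{CXZ} directly, as the statement does.

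\textbf{Step 3: syzygies.} For $N\in\modu B$, take a presentation $F(P_1)\xrightarrow{F(h)}F(P_0)\to N\to0$. Then $\Omega_BN$ is the image of $F(h)$, which is a submodule of $F(P_0)\in\operatorname{Im}F$. By Step 2, $\Omega_BN\in\operatorname{Im}F$.

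\textbf{Step 4: the wide case.} If $\mathcal H$ is wide, every $N\in\modu B$ is the cokernel of $F(h)$. The cokernel $C=\operatorname{Coker}h$ lies in $\mathcal H$ by wideness. Exactness of $F$ on the conflations $0\to\operatorname{Im}h\to P_0\to C\to0$ gives $F(C)\cong N$, so $F$ is essentially surjective.
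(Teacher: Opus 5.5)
Your Step~1 is sound. The comparison $\Ext^1_{\mathcal H}(Z,X)\cong\Ext^1_B(F(Z),F(X))$ holds because $F$ carries the pushout of $0\to K_Z\to P_Z\to Z\to0$ along $f\colon K_Z\to X$ to the pushout along $F(f)$. Step~4 is fine once you also note that $\ker h\in\mathcal H$, so that $F(P_1)\to F(\operatorname{Im}h)$ is onto.

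The genuine gap is Step~2, which you leave open, and the strategy you set up there cannot be completed. You look for the lift of $M\subseteq F(X)$ as a submodule $Y\subseteq X$. But $\mathcal H$ is not submodule closed in $\modu A$ (only $T^\perp$ is), and the lift need not be a submodule of $X$ at all. Example~\ref{ex:radical-boundary} already shows this. There $T=0$ and $U=P_1\oplus S_1$; take $X=S_1$, so $F(X)=Q_2$, and $M=\rad Q_2\cong Q_1=F(P_1)$. Your $g$ is $P_1\onto S_1$, so $Y=\operatorname{Im}g=S_1$ and $F(Y)=Q_2\neq M$. Since $S_1$ has no submodules other than $0$ and $S_1$, no other choice of $g$ and no approximation argument can help. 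Indeed $\ker g=S_2\notin\Fac U$ and $\Ext^1_A(P,S_2)\cong\Ext^1_A(S_1,S_2)\neq0$, so the sufficient condition you isolate genuinely fails. The correct lift is $P_1$, which maps to $X$ non-injectively. Falling back on the citation is not a proof, and Step~3 inherits the gap because it rests on Step~2.

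The missing idea is to quotient $P'$ only by the $\Fac U$-torsion part of $\ker g$, not by all of it.
\begin{itemize}
\item Put $K=\ker g$, let $t_U$ be the torsion radical of the torsion pair $(\Fac U,U^\perp)$, and set $Y=P'/t_UK$, with the map $Y\to X$ induced by $g$.
\item $t_UK\in\Fac U$ and $t_UK\subseteq P'\in T^\perp$, so $t_UK\in\mathcal H$.
\item $Y\in\Fac U$ as a quotient of $P'$. Also $Y$ is an extension of $\operatorname{Im}g\subseteq X$ by $K/t_UK\in U^\perp\subseteq T^\perp$ (the inclusion holds as $T\in\Fac U$), so $Y\in T^\perp$.
\item Hence $0\to t_UK\to P'\to Y\to0$ is a conflation in $\mathcal H$, and $F(P')\to F(Y)$ is onto.
\item The kernel of $Y\to X$ is $K/t_UK\in U^\perp$, and $\Hom_A(P,U^\perp)=0$ since $P$ is a quotient of $U$. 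So $F(Y)\to F(X)$ is injective, with image $\operatorname{Im}F(g)=M$.
\end{itemize}
Thus $M\cong F(Y)$, which gives submodule closure; in the example above this construction produces $Y=P_1$. Step~3 then goes through as you wrote it.
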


\begin{theorem}\cite[Theorem 2.7]{CXZ}
Let $U\in\mathrm{s}\tau\text{-}\operatorname{tilt}A$,
let $\mathcal V\subseteq\operatorname{Fac}U$ be any torsion
class, and denote its torsion radical by $t_{\mathcal V}$.
Put
\[
\mathcal H_{\mathcal V,U}
=\operatorname{Fac}U\cap\mathcal V^\perp,
\qquad
R=U/t_{\mathcal V}(U).
\]
Then $R$ is an Ext-progenerator of
$\mathcal H_{\mathcal V,U}$.
If $P$ is a basic representative of $\operatorname{add}R$
and $B=\operatorname{End}_A(P)$, the functor
$\operatorname{Hom}_A(P,-)$ identifies
$\mathcal H_{\mathcal V,U}$ with a submodule-closed,
extension-closed exact subcategory of $\operatorname{mod}B$
containing all projectives.
It preserves minimal projective resolutions.
No functorial finiteness of $\mathcal V$ is required.
\end{theorem}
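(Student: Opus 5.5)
The proof has three parts: check that $R$ lies in the heart and is Ext-projective there, show that $R$ generates with kernels staying in the heart, and then transfer everything along $F=\Hom_A(P,-)$. The only inputs are that $U$ is an Ext-projective generator of $\Fac U$ with $\Proj(\Fac U)=\add U$, and that $(\mathcal V,\mathcal V^\perp)$ is a torsion pair with radical $t_{\mathcal V}$. Throughout write $t=t_{\mathcal V}$.

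\emph{$R$ is an Ext-projective object of the heart.} $R=U/t(U)$ is a quotient of $U$, so $R\in\Fac U$. It lies in $\mathcal V^\perp$ because $U/t(U)$ is torsion-free for any torsion radical. For $X\in\mathcal H_{\mathcal V,U}$, apply $\Hom_A(-,X)$ to $0\to t(U)\to U\to R\to0$. This gives
\[
\Hom_A(t(U),X)\to\Ext^1_A(R,X)\to\Ext^1_A(U,X).
\]
The left term vanishes since $t(U)\in\mathcal V$ and $X\in\mathcal V^\perp$. The right term vanishes since $U$ is Ext-projective in $\Fac U$. Hence $\Ext^1_A(R,X)=0$.

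\emph{$R$ generates, with kernel in the heart.} Take a right $\add U$-approximation $f\colon U^n\to X$. It is surjective because $X\in\Fac U$, and for a support $\tau$-tilting $U$ its kernel $K'$ lies in $\Fac U$; this is the standard Auslander--Smal\o/AIR fact. Since $\Hom_A(t(U^n),X)=0$, we have $t(U^n)\subseteq K'$, so $f$ factors through a surjection $\bar f\colon R^n\to X$. Its kernel is $K=K'/t(U^n)$. Then $K\in\Fac U$ as a quotient of $K'$, and $K\in\mathcal V^\perp$ as a submodule of $R^n$, since torsion-free classes are submodule closed. So $0\to K\to R^n\to X\to0$ is a conflation in the heart. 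Together with the first step, this shows $R$ is an Ext-progenerator, and any other Ext-projective is a summand of some $R^n$.

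\emph{Transfer along $F$.} $F$ is exact on conflations because $\Ext^1_A(P,-)$ vanishes on the heart, and $F(P)=B$. Full faithfulness follows by the usual projectivization argument. Take a presentation $P_1\to P_0\to X\to0$ in the heart. Applying $\Hom(-,Y)$ and $\Hom_B(F(-),F(Y))$ gives two left exact sequences, which agree on $\add P$ by Yoneda; the five lemma then gives $\Hom_A(X,Y)\cong\Hom_B(FX,FY)$.

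Minimal projective resolutions are preserved for two reasons. $F$ sends projective deflations with kernel in the heart to projective covers, because full faithfulness preserves radical maps between objects of $\add P$. And iterating the generator step keeps each syzygy inside the heart.

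Extension closure of the image goes through Ext comparison. A class in $\Ext^1_B(FY,FX)$ is represented by a map $F(K_Y)\to FX$. This lifts to a map $K_Y\to X$, and its pushout along $0\to K_Y\to P_Y\to Y\to0$ is an extension in the heart, since the heart is extension closed. Applying the exact functor $F$ recovers the given extension.

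\emph{Main obstacle: submodule closure.} Let $N\subseteq F(X)$. My first attempt is to take a projective cover $F(P')\to N$, realize the composite $F(P')\to F(X)$ as $F(g)$, and use $\operatorname{Im}g\in\mathcal H_{\mathcal V,U}$: it is a quotient of $P'\in\Fac U$ and a submodule of $X\in\mathcal V^\perp$. The difficulty is that $F(\operatorname{Im}g)$ may be strictly larger than $N$. To fix this I would instead realize $N$ as the kernel of a map $F(X)\to F(X'')$ with $X''$ in the heart, since $F$ is left exact and kernels of heart maps taken in $\modu A$ remain in $\mathcal V^\perp$. Failing that, I would fall back on $\Omega_B(\modu B)\subseteq\operatorname{Im}F$ (Proposition~\ref{prop:resolving}), which presents $N$ as a syzygy-type subobject of a projective.
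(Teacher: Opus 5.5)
Most of your argument holds up. Ext-projectivity of $R$, the conflation $0\to K'/t(U^n)\to R^n\to X\to0$, exactness and full faithfulness of $F$, and extension closure of $\operatorname{Im}F$ are all correct. Two small points: for minimality you need \emph{right minimal} deflations, not arbitrary ones, and ``contains all projectives'' uses $B=F(P)=\bigoplus_iF(P_i)$.

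The genuine gap is submodule closure. You leave it open, and neither fallback closes it.
\begin{itemize}
\item Fallback (a) needs $F(X)/N$ to embed in an object of $\operatorname{Im}F$, and this fails in general. In Example~\ref{ex:radical-boundary} one has $\operatorname{Im}F=\operatorname{proj}B$. For $N=\rad Q_2\subseteq Q_2=F(S_1)$, the quotient $L_2$ has no nonzero map to a projective. Moreover, even when such a map $h\colon X\to X''$ exists, $\ker h$ lies in $\mathcal V^\perp$ but not necessarily in $\Fac U$.
\item Fallback (b) is circular, since Proposition~\ref{prop:resolving} is a consequence of the statement you are proving. In any case it only covers submodules of projectives. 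Pulling $N$ back to $F(P_X)$ and then returning to $N$ requires a cokernel, which $F$ does not preserve: it sends the epimorphism $P_1\to S_1$ of that example to the monomorphism $Q_1\to Q_2$.
\end{itemize}

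The missing idea is that your first attempt quotients $P'$ by too much. The heart is not submodule closed in $\modu A$, so the full kernel $K=\ker g$ can leave it; quotient only by its $\Fac U$-torsion part. Put $Y=P'/\tr_U(K)$ instead of $\operatorname{Im}g=P'/K$.
\begin{itemize}
\item $Y\in\Fac U$, being a quotient of $P'$.
\item $Y\in\mathcal V^\perp$. Use the exact sequence $0\to K/\tr_U(K)\to Y\to\operatorname{Im}g\to0$: the left term lies in $U^\perp\subseteq\mathcal V^\perp$ because $\mathcal V\subseteq\Fac U$, and $\operatorname{Im}g\subseteq X$ lies in $\mathcal V^\perp$.
\item $\tr_U(K)$ lies in $\Fac U$, and in $\mathcal V^\perp$ as a submodule of $P'$. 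So $0\to\tr_U(K)\to P'\to Y\to0$ is a conflation in the heart, and $F(P')\to F(Y)$ is surjective.
\item The induced map $Y\to X$ has kernel $K/\tr_U(K)\in U^\perp$. Since $P\in\Fac U$ we have $\Hom_A(P,U^\perp)=0$, so $F(Y)\to F(X)$ is injective by left exactness.
\end{itemize}
Hence $N=\operatorname{Im}F(g)\cong F(Y)$ with $Y\in\mathcal H_{\mathcal V,U}$. In Example~\ref{ex:radical-boundary} this replaces $\operatorname{Im}g=S_1$ by $Y=P_1$, which recovers $N=Q_1$.
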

\begin{remark}
The comparison results of Section~3, the stabilization
criterion, the syzygy-finiteness transfer.
They consequently hold for $\mathcal H_{\mathcal V,U}$
with the same definitions and proofs.
We retain $[T,U]$ in their statements for readability.
The canonical wide realization used for ICE hearts
in Section~4 continues to be stated with two
functorially finite endpoints.
\end{remark}

\section{Relative derived delooping level on intervals }\label{sec:derived}

\subsection{Objectwise and global levels}

A finite sequence
\[
0\longrightarrow C_t\longrightarrow C_{t-1}
\longrightarrow\cdots\longrightarrow C_0
\longrightarrow X\longrightarrow0
\]
in $\E$ is called \emph{admissibly exact} if there are objects
$Z_0=X,Z_1,\ldots,Z_t=C_t$ and conflations
\[
0\longrightarrow Z_{i+1}\longrightarrow C_i
\longrightarrow Z_i\longrightarrow0,
\]
where $ 0\leq i<t$.
For $t=0$, the map $C_0\longrightarrow X$ is required
to be an isomorphism.

\begin{definition}\cite[Definition 2.4]{GLZ}
For $X\in\E$ and  $k\ge1$, define the $k$-delooping level of $X$ by
\[
k\text{-}\dell_\E X
=\inf\{a\ge0\mid\Omega_\E^aX\text{ is a stable retract of }
\Omega_\E^{a+k}Y\text{ for some }Y\in\E\}.
\]
\end{definition}
\begin{definition}\cite[Definition 2.5]{GLZ}
For $X\in\E$ and  $k\ge1$, define the $k$-derived delooping level of $X$ by
\begin{equation*}
\begin{split}
k\text{-}\ddell_{\E}X=\inf\{m\ge0\mid{}&0\le t\le m,\quad
0\to C_t\to\cdots\to C_0\to X\to0\text{ admissibly exact},\\
&(k+i)\text{-}\dell_{\E}C_i\le m-i, ~0\le i\le t\}.
\end{split}
\end{equation*}
\end{definition}
At $k=1$ we omit the prefix. For a finite-dimensional algebra $\Lambda$, its algebra invariant is the maximum over simple modules:
\[
k\text{-}\ddell\Lambda=\sup_{S\text{ simple}}k\text{-}\ddell_{\Lambda}S.
\]
This is different from the supremum over all modules. We use the ordinary analogous convention for $k\text{-}\dell\Lambda$.

We apply the ordinary and derived delooping levels for exact categories \cite{GLZ} to the heart
$\mathcal H_{T,U}$. Following the interval notation of \cite{CXZ}, for $X\in\mathcal H_{T,U}$ and $k\ge1$, we write
\[
\begin{aligned}
k\text{-}\dell_{[T,U]}X&=k\text{-}\dell_{\mathcal H_{T,U}}X,\\
k\text{-}\ddell_{[T,U]}X&=k\text{-}\ddell_{\mathcal H_{T,U}}X.
\end{aligned}
\]
Their global versions are denoted by
\[
\begin{aligned}
\operatorname{glo}\text{-}k\text{-}\dell[T,U]
&=\sup_{X\in\mathcal H_{T,U}}k\text{-}\dell_{[T,U]}X,\\
\operatorname{glo}\text{-}k\text{-}\ddell[T,U]
&=\sup_{X\in\mathcal H_{T,U}}k\text{-}\ddell_{[T,U]}X.
\end{aligned}
\]
For $k=1$, we omit the prefix whenever there is no ambiguity.

For a finite-dimensional algebra $\Lambda$, we likewise write
\[
\begin{aligned}
\operatorname{glo}\text{-}k\text{-}\dell\Lambda
&=\sup_{M\in\modu\Lambda}k\text{-}\dell_{\Lambda}M,\\
\operatorname{glo}\text{-}k\text{-}\ddell\Lambda
&=\sup_{M\in\modu\Lambda}k\text{-}\ddell_{\Lambda}M.
\end{aligned}
\]
This global invariant is distinct from $k\text{-}\ddell\Lambda$, whose supremum is taken over simple modules.

\begin{lemma}\label{lem:basic}\cite[Lemma 2.6 and 2.9, Proposition 2.7]{GLZ}
For every $X\in\E$ and $k\ge1$,
\begin{align*}
k\text{-}\ddell_{\E}X&\le k\text{-}\dell_{\E}X,\\
k\text{-}\ddell_{\E}X&\le(k+1)\text{-}\ddell_{\E}X,\\
k\text{-}\ddell_{\E}X&\le1+(k+1)\text{-}\ddell_{\E}\Omega_\E X.
\end{align*}
\end{lemma}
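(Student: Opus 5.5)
We prove the three inequalities of Lemma~\ref{lem:basic} separately, working directly from the definitions in the exact category $\E$ and using only that $\E$ has enough projectives and that relative syzygies are well defined up to stable isomorphism.

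\emph{First inequality.} Take the trivial admissibly exact sequence with $t=0$ and $C_0=X$, the map $C_0\to X$ being the identity. The condition on the sequence then reduces to $k\text{-}\dell_\E X\le m$. Hence every admissible value $m$ for the ordinary level is also admissible for the derived level, and $k\text{-}\ddell_\E X\le k\text{-}\dell_\E X$.

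\emph{Second inequality.} We first show the ordinary monotonicity $(k+i)\text{-}\dell_\E C\le(k+1+i)\text{-}\dell_\E C$. Suppose $\Omega^a C$ is a stable retract of $\Omega^{a+k+1+i}Y$. Then $\Omega^{a+k+1+i}Y\simeq\Omega^{a+k+i}(\Omega Y)$, with $\Omega Y\in\E$. So the same $a$ witnesses the $(k+i)$-level with test object $\Omega Y$. Consequently any sequence and bound $m$ that witness $(k+1)\text{-}\ddell_\E X\le m$ also witness $k\text{-}\ddell_\E X\le m$.

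\emph{Third inequality.} Suppose $(k+1)\text{-}\ddell_\E\Omega X=m$ is realized by an admissibly exact sequence $0\to C_t\to\cdots\to C_0\to\Omega X\to0$, so that $(k+1+i)\text{-}\dell C_i\le m-i$. Choose a projective deflation $0\to\Omega X\to P_X\to X\to0$, and splice it onto this sequence. The result is an admissibly exact sequence
\[
0\to C_t\to\cdots\to C_0\to P_X\to X\to0
\]
of length $t+1$. Its terms are $C'_0=P_X$ and $C'_{i+1}=C_i$. The intermediate objects are $Z'_0=X$, $Z'_1=\Omega X$, and then the $Z_i$ of the original sequence.

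We now check the conditions with bound $m+1$. The projective term has $k\text{-}\dell_\E P_X=0\le m+1$, since $\Omega^0P_X$ is stably zero and therefore a retract of anything. For the shifted terms, $(k+(i+1))\text{-}\dell C_i\le m-i=(m+1)-(i+1)$. Finally $t+1\le m+1$. Hence $k\text{-}\ddell_\E X\le m+1$, which is the claimed bound.

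\emph{Main obstacle.} The delicate points are the bookkeeping of indices in the third inequality and the edge case $t=0$. When $t=0$ the map $C_0\to\Omega X$ is an isomorphism, and the spliced sequence is simply $0\to C_0\to P_X\to X\to0$, which is genuinely admissibly exact. One must also check that stable retraction is invariant under replacing $\Omega Y$ by any stably isomorphic choice; this holds because of Schanuel's lemma, as noted in the preliminaries.
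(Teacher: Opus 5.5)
Your proof is correct. It uses the length-zero witness for the first inequality, monotonicity of the ordinary levels via $\Omega^{a+k+1+i}Y\simeq\Omega^{a+k+i}(\Omega Y)$ for the second, and splicing with a projective conflation $0\to\Omega X\to P_X\to X\to0$ with the indices shifted by one for the third. The paper does not prove this lemma but cites \cite{GLZ}, and your splicing argument is exactly the one the paper carries out in the proof of Corollary~\ref{thm:object}.
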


\begin{lemma}\label{lem:horseshoe}\cite[Lemma 3.2]{GLZ}
If $0\to C_t\to\cdots\to C_0\to X\to0$ is admissibly exact, then for every $s\ge0$, there is an admissibly exact sequence
\[
0\to C_t^{[s]}\to\cdots\to C_0^{[s]}\to\Omega_\E^sX\to0,
\qquad C_i^{[s]}\simeq\Omega_\E^s C_i\text{ in }\st\E.
\]
Consequently,
\begin{equation*}
(k+1)\text{-}\ddell_{\E}\Omega_\E X\le k\text{-}\ddell_{\E}X.
\end{equation*}
\end{lemma}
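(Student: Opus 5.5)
The statement to prove is Lemma~\ref{lem:horseshoe}: an admissibly exact sequence for $X$ can be transported along $\Omega_\E^s$, and this gives the inequality $(k+1)\text{-}\ddell_\E\Omega_\E X\le k\text{-}\ddell_\E X$.

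\textbf{Construction, by induction on $s$.} It suffices to treat $s=1$ and then iterate, since stable isomorphisms compose.

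Write the given sequence through its conflations
\[
0\to Z_{i+1}\to C_i\to Z_i\to0,\qquad Z_0=X,\quad Z_t=C_t .
\]
Go from the top down, building for each $i$ a conflation
\[
0\to Z_{i+1}'\to C_i'\to Z_i'\to0,\qquad Z_i'\simeq\Omega_\E Z_i \text{ stably},\quad C_i'\simeq\Omega_\E C_i \text{ stably}.
\]
The tool is the horseshoe lemma in the exact category $\E$, which has enough projectives. Given $0\to Z_{i+1}\to C_i\to Z_i\to0$, choose projective deflations $P''\to Z_{i+1}$ and $P'\to Z_i$. Lift $P'\to Z_i$ to $C_i$, which is possible because $P'$ is projective. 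This gives a deflation $P''\oplus P'\to C_i$ and, by the $3\times3$ lemma in exact categories, a conflation of syzygies
\[
0\to\Omega Z_{i+1}\to\Omega C_i\to\Omega Z_i\to0 .
\]
The difficulty is that the syzygy of $Z_{i+1}$ produced at step $i$ must agree with the object used at step $i+1$. I handle this by taking the syzygies $\Omega Z_j$ coming from projective deflations fixed once and for all at each $Z_j$. Each $Z_j$ gets one deflation, reused both where $Z_j$ is the right term and where it is the left term. Then $\Omega C_i$ is defined as the horseshoe kernel, which is stably $\Omega_\E C_i$ by Schanuel's lemma.

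Set $C_i^{[1]}=\Omega C_i$ for $i<t$ and $C_t^{[1]}=\Omega Z_t$. Splicing the conflations gives an admissibly exact sequence ending at $\Omega_\E X$. In the degenerate case $t=0$ the map $C_0\to X$ is an isomorphism, and we take $C_0^{[1]}=\Omega X$.

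\textbf{The inequality.} Take a witness for $k\text{-}\ddell_\E X=m$: an admissibly exact sequence with $t\le m$ and $(k+i)\text{-}\dell_\E C_i\le m-i$. Apply the construction with $s=1$. The claim to prove is
\[
(k+1+i)\text{-}\dell_\E\,\Omega_\E C_i\le m-i .
\]
Indeed, suppose $\Omega^aC_i$ is a stable retract of $\Omega^{a+k+i}Y$ with $a\le m-i$. Since $\Omega$ is a functor on $\st\E$, applying it gives that $\Omega^{a}(\Omega C_i)=\Omega^{a+1}C_i$ is a stable retract of $\Omega^{a+k+i+1}Y$. Then for the same $a$, $\Omega^a(\Omega C_i)$ is a stable retract of $\Omega^{a+(k+1+i)}Y$. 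So the same $a$ works and $(k+1+i)\text{-}\dell_\E(\Omega C_i)\le a$.

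Delooping levels depend only on the stable class, because stable isomorphism preserves the retract condition. So the same bound holds for $C_i^{[1]}$, and the new sequence witnesses $(k+1)\text{-}\ddell_\E\Omega_\E X\le m$.

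\textbf{Main obstacle.} I expect the hard step to be the coherent horseshoe construction in a general exact category: making sure every kernel obtained lies in $\E$ and that the resulting sequences are genuine conflations. I would use the $3\times3$ lemma for exact categories (Bühler) together with the obscure axiom, which $\E$ satisfies because it is idempotent complete.
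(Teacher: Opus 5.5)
Your proposal is correct. It is the standard horseshoe argument behind \cite[Lemma 3.2]{GLZ}, which the paper quotes without reproving: fixing one projective deflation at each $Z_j$ makes the conflations $0\to\Omega Z_{i+1}\to C_i^{[1]}\to\Omega Z_i\to0$ splice, and applying the stable functor $\Omega_\E$ to a retraction $\Omega_\E^aC_i\rightleftarrows\Omega_\E^{a+k+i}Y$ with $a\le m-i$ gives $(k+1+i)\text{-}\dell_\E C_i^{[1]}\le m-i$.

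The obstacle you anticipate is milder than you expect, because $\E\subseteq\modu A$ is extension closed with the inherited structure. The snake lemma in $\modu A$ shows that $P''\oplus P'\to C_i$ is surjective with kernel $K$ in an exact sequence $0\to\Omega Z_{i+1}\to K\to\Omega Z_i\to0$, so $K\in\E$ and no $3\times3$ lemma or obscure axiom is needed.
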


For each $M\in\operatorname{mod}B$, fix an object
$X_M\in\mathcal H_{T,U}$ such that
\[
F(X_M)\cong\Omega_BM.
\]
Such an object exists by Proposition~\ref{prop:resolving}
and is unique up to isomorphism by the full faithfulness of $F$
in Proposition~\ref{prop:yoneda}.

\begin{lemma}\label{lem:lift-omega}
With the notation above,
 for every $M\in\operatorname{mod}B$
and every integer $k\ge1$, we have
\begin{align*}
k\text{-}\dell_{[T,U]}X_M
&=
k\text{-}\dell_{\operatorname{Im}F}(\Omega_BM)
\le k\text{-}\dell_BM,\\
k\text{-}\ddell_{[T,U]}X_M
&\le k\text{-}\ddell_BM.
\end{align*}
\end{lemma}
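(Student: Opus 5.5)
We transport everything through the exact equivalence $F:\mathcal H_{T,U}\to\operatorname{Im}F$ of Proposition~\ref{prop:resolving} and then compare the exact category $\operatorname{Im}F$ with $\modu B$. Since $F$ is an exact equivalence sending $\add P$ onto $\operatorname{proj}B=\Proj(\operatorname{Im}F)$, it identifies stable categories and relative syzygies, by Proposition~\ref{prop:yoneda}. It also preserves stable retracts and admissibly exact sequences. Hence $k\text{-}\dell_{[T,U]}X=k\text{-}\dell_{\operatorname{Im}F}F(X)$ and $k\text{-}\ddell_{[T,U]}X=k\text{-}\ddell_{\operatorname{Im}F}F(X)$ for all $X$, and applying this to $X=X_M$ gives the first equality.

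For the inequality $k\text{-}\dell_{\operatorname{Im}F}(\Omega_BM)\le k\text{-}\dell_BM$, let $a=k\text{-}\dell_BM<\infty$. Then $\Omega_B^aM$ is a stable retract in $\st{\modu}B$ of $\Omega_B^{a+k}N$ for some $N\in\modu B$. Apply $\Omega_B$: the syzygy functor on $\st{\modu}B$ is additive, so $\Omega_B^{a+1}M$ is a stable retract of $\Omega_B^{a+k+1}N=\Omega_B^{a+k}(\Omega_BN)$. Two facts transfer this to $\operatorname{Im}F$. First, $\Omega_BN\in\operatorname{Im}F$. Second, syzygies in $\operatorname{Im}F$ agree with $B$-syzygies, because projective deflations in $\operatorname{Im}F$ are projective covers in $\modu B$ with kernels in $\operatorname{Im}F$, which is submodule closed. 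Moreover $\operatorname{Im}F$ is a full subcategory with the same projectives, so its stable category is a full subcategory of $\st{\modu}B$ and retractions transfer. Thus $\Omega_{\operatorname{Im}F}^a(\Omega_BM)$ is a stable retract of $\Omega_{\operatorname{Im}F}^{a+k}(\Omega_BN)$, and the level of $\Omega_BM$ is at most $a$.

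For the derived inequality, let $m=k\text{-}\ddell_BM$ be witnessed by an exact sequence $0\to C_t\to\cdots\to C_0\to M\to0$ in $\modu B$ with $(k+i)\text{-}\dell_BC_i\le m-i$. I would apply the horseshoe construction of Lemma~\ref{lem:horseshoe} for $\E=\modu B$ with $s=1$. This gives an exact sequence
\[
0\to C_t^{[1]}\to\cdots\to C_0^{[1]}\to\Omega_BM\to0,
\]
with $C_i^{[1]}\simeq\Omega_BC_i$ stably. Because the construction uses projective resolutions, each $C_i^{[1]}$ and each intermediate kernel is a first syzygy up to projective summands, so it lies in $\operatorname{Im}F$. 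The sequence is therefore admissibly exact in $\operatorname{Im}F$. The delooping level is invariant under adding projective summands, since it depends only on stable classes of syzygies. So the ordinary inequality just proved gives $(k+i)\text{-}\dell_{\operatorname{Im}F}C_i^{[1]}\le(k+i)\text{-}\dell_BC_i\le m-i$, which yields $k\text{-}\ddell_{\operatorname{Im}F}\Omega_BM\le m$.

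The main obstacle is checking that the horseshoe terms and the intermediate objects $Z_i$ really lie in $\operatorname{Im}F$. I would verify this by writing the horseshoe explicitly: each term is the kernel of an epimorphism from a projective, hence in $\Omega_B(\modu B)\subseteq\operatorname{Im}F$ up to projective summands. If some term instead arises as a pullback, I would use that $\operatorname{Im}F$ is submodule closed, since every such object is a submodule of a projective.
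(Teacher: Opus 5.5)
Your proposal is correct and follows the paper's proof step for step. It transports the levels through the exact equivalence $F$, applies $\Omega_B$ once to the stable retraction for the ordinary level, and uses the horseshoe construction with $s=1$ for the derived level. Your explicit check that the horseshoe terms and intermediate objects $Z_i$ are submodules of projectives, and hence lie in $\operatorname{Im}F$, fills in a point the paper only asserts from the fact that each $\Omega_BC_i$ belongs to $\operatorname{Im}F$.
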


\begin{proof}
By Proposition~\ref{prop:resolving}, the subcategory
$\operatorname{Im}F$ contains all projective $B$-modules and is closed
under submodules.

Suppose that $a=k\text{-}\dell_BM<\infty$.
There is a $B$-module $N$ and a stable retraction
\[
\Omega_B^aM
\ \rightleftarrows\
\Omega_B^{a+k}N.
\]
The syzygy functor on the stable category preserves this retraction,
so applying it once gives a stable retraction
\[
\Omega_B^a(\Omega_BM)
\ \rightleftarrows\
\Omega_B^{a+k}(\Omega_BN).
\]
Both $\Omega_BM$ and $\Omega_BN$ belong to $\operatorname{Im}F$.
Moreover, their projective resolutions stay in $\operatorname{Im}F$
and compute the relative syzygies there.
Because $\operatorname{Im}F$ is full and has exactly the same
projective objects as $\modu B$, the maps and their projective
factorizations can also be interpreted inside $\operatorname{Im}F$.
Consequently, the displayed stable retraction gives
\[
k\text{-}\dell_{\operatorname{Im}F}(\Omega_BM)\le a.
\]
The exact equivalence
$F:\mathcal H_{T,U}\longrightarrow\operatorname{Im}F$
identifies the relative projectives and the stable retractions.
Therefore
\[
k\text{-}\dell_{[T,U]}X_M
=
k\text{-}\dell_{\operatorname{Im}F}(\Omega_BM)
\le k\text{-}\dell_BM.
\]

Suppose that $m=k\text{-}\ddell_BM<\infty$.
Choose an exact sequence
\[
0\longrightarrow C_t\longrightarrow\cdots
\longrightarrow C_0\longrightarrow M\longrightarrow0
\]
 and
\[
(k+i)\text{-}\dell_BC_i\le m-i,
\]
where $t\le m$ and $0\le i\le t$.
By  Lemma~\ref{lem:horseshoe},
there is an exact sequence
\[
0\longrightarrow C_t'\longrightarrow\cdots
\longrightarrow C_0'\longrightarrow\Omega_BM\longrightarrow0,
\]
where each $C_i'$ is stably isomorphic to $\Omega_BC_i$.
Each $\Omega_BC_i$ belongs to $\operatorname{Im}F$.
Hence this sequence
is admissibly exact in $\operatorname{Im}F$.

\[
\begin{aligned}
(k+i)\text{-}\dell_{\operatorname{Im}F}C_i'
&=(k+i)\text{-}\dell_{\operatorname{Im}F}(\Omega_BC_i)\\
&\le (k+i)\text{-}\dell_BC_i\\
&\le m-i.
\end{aligned}
\]
Thus the new sequence has the same length $t\le m$ and satisfies all
the required estimates.
It follows that
\[
k\text{-}\ddell_{\operatorname{Im}F}(\Omega_BM)\le m.
\]
Then
\[
k\text{-}\ddell_{[T,U]}X_M
\le m
=k\text{-}\ddell_BM.
\]
\end{proof}

\begin{proposition}\label{prop:object-ordinary}\cite[Proposition 3.7]{CXZ}
For $X\in\mathcal H_{T,U}$ and $k\ge1$,
\[
k\text{-}\dell_BF(X)
\le k\text{-}\dell_{[T,U]}X
\le(k+1)\text{-}\dell_BF(X).
\]
\end{proposition}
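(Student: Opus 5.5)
Both inequalities will be proved by transporting stable retractions between $\mathcal H_{T,U}$ and $\modu B$ through $F$, in the spirit of Lemma~\ref{lem:lift-omega}. By Propositions~\ref{prop:yoneda} and~\ref{prop:resolving}, $F$ is an exact equivalence onto $\operatorname{Im}F$. It sends $\add P$ onto $\operatorname{proj}B$ and satisfies $F(\Omega^s_{T,U}X)\cong\Omega_B^sF(X)$. Moreover $\operatorname{Im}F$ is submodule closed and contains $\Omega_B(\modu B)$. So $k\text{-}\dell_{[T,U]}X=k\text{-}\dell_{\operatorname{Im}F}F(X)$, and it suffices to compare delooping inside $\operatorname{Im}F$ with delooping in $\modu B$ for $M=F(X)\in\operatorname{Im}F$.

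\emph{Left inequality.} Suppose $a=k\text{-}\dell_{\operatorname{Im}F}M<\infty$. Then there are $Y'\in\operatorname{Im}F$ and maps $\Omega^aM\rightleftarrows\Omega^{a+k}Y'$ whose composite is $1$ modulo maps factoring through projectives. Syzygies computed in $\operatorname{Im}F$ agree with $\Omega_B$, because the projectives coincide and the kernels stay in the subcategory. Maps factoring through $\add P$ correspond to maps factoring through $\operatorname{proj}B$. Since $\operatorname{Im}F\subseteq\modu B$ is full, the same retraction is a stable retraction in $\modu B$ with $Y'\in\modu B$. Hence $k\text{-}\dell_BM\le a$.

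\emph{Right inequality.} Suppose $b=(k+1)\text{-}\dell_BM<\infty$. Choose $N\in\modu B$ with $\Omega_B^bM$ a stable retract of $\Omega_B^{b+k+1}N=\Omega_B^{b+k}(\Omega_BN)$. Since $\Omega_BN\in\operatorname{Im}F$, write $\Omega_BN\cong F(Y)$ with $Y\in\mathcal H_{T,U}$. Both $\Omega^b_BM\cong F(\Omega^b_{T,U}X)$ and $\Omega_B^{b+k}F(Y)\cong F(\Omega^{b+k}_{T,U}Y)$ lie in $\operatorname{Im}F$. Full faithfulness of $F$, together with the identification of $\add P$ with $\operatorname{proj}B$, lets us pull the retraction maps and the projective factorization of the error term back to $\mathcal H_{T,U}$. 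This gives $k\text{-}\dell_{[T,U]}X\le b$. The shift by one in the order is exactly what moves the witness $N$ into $\operatorname{Im}F$ via $\Omega_B(\modu B)\subseteq\operatorname{Im}F$. The case $b=\infty$ is vacuous.

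\emph{Main obstacle.} The delicate step is checking that stable equivalence in $\operatorname{Im}F$ coincides with stable equivalence in $\modu B$ for objects of $\operatorname{Im}F$. A map in $\operatorname{Im}F$ factors through a projective $B$-module if and only if it factors through an object of $\add F(P)$; this holds because $\operatorname{proj}B=\add F(P)\subseteq\operatorname{Im}F$ and $F$ is full. One must also confirm that minimal syzygies agree up to projective summands. This follows from Proposition~\ref{prop:yoneda}, and stable retractions are insensitive to projective summands.
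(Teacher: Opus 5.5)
Your proof is correct. For the left inequality you push the stable retraction forward along $F$. For the right inequality you rewrite $\Omega_B^{b+k+1}N=\Omega_B^{b+k}(\Omega_BN)$ with $\Omega_BN\in\operatorname{Im}F$, then pull the maps and the projective factorization back using full faithfulness and $F(\add P)=\operatorname{proj}B$; this is exactly right. The paper does not reprove this statement (it is quoted from [CXZ, Proposition~3.7]), but your argument is the same transport-of-retractions mechanism used in Lemma~\ref{lem:lift-omega}, and there too the one-step order shift comes from $\Omega_B(\modu B)\subseteq\operatorname{Im}F$.
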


\begin{corollary}[Objectwise derived comparison]\label{thm:object}
For $X\in\mathcal H_{T,U}$ and $k\ge1$,
\begin{equation*}
\begin{aligned}
k\text{-}\ddell_{B}F(X)
&\le k\text{-}\ddell_{[T,U]}X\le k\text{-}\dell_{[T,U]}X,\\
k\text{-}\ddell_{[T,U]}X
&\le1+(k+1)\text{-}\ddell_{B}F(X).
\end{aligned}
\end{equation*}
\end{corollary}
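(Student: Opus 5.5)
The middle inequality $k\text{-}\ddell_{[T,U]}X\le k\text{-}\dell_{[T,U]}X$ is the first inequality of Lemma~\ref{lem:basic}, applied with $\E=\mathcal H_{T,U}$. The other two inequalities come from transporting admissibly exact witnesses through the exact equivalence $F:\mathcal H_{T,U}\to\operatorname{Im}F$ of Proposition~\ref{prop:resolving}, and then comparing objectwise levels by Proposition~\ref{prop:object-ordinary} and Lemma~\ref{lem:lift-omega}.

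\emph{Lower bound.} Let $m=k\text{-}\ddell_{[T,U]}X<\infty$, realised by an admissibly exact sequence $0\to C_t\to\cdots\to C_0\to X\to0$ in $\mathcal H_{T,U}$ with $t\le m$ and $(k+i)\text{-}\dell_{[T,U]}C_i\le m-i$. The defining conflations have all three terms in the heart, so exactness of $F$ turns them into short exact sequences in $\modu B$. Splicing these gives an exact sequence $0\to F(C_t)\to\cdots\to F(C_0)\to F(X)\to0$ of $B$-modules. By the left half of Proposition~\ref{prop:object-ordinary},
\[
(k+i)\text{-}\dell_BF(C_i)\le(k+i)\text{-}\dell_{[T,U]}C_i\le m-i .
\]
Hence this sequence witnesses $k\text{-}\ddell_BF(X)\le m$.

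\emph{Upper bound.} Put $M=F(X)$. Lemma~\ref{lem:basic} applied in $\mathcal H_{T,U}$ gives
\[
k\text{-}\ddell_{[T,U]}X\le 1+(k+1)\text{-}\ddell_{[T,U]}\Omega_{T,U}X .
\]
By Proposition~\ref{prop:yoneda}, $F(\Omega_{T,U}X)\cong\Omega_BM$. Full faithfulness then gives $\Omega_{T,U}X\cong X_M$ up to projective summands. If minimal deflations are used, we get $\Omega_{T,U}X\cong X_M$ exactly, by uniqueness of the chosen $X_M$. Lemma~\ref{lem:lift-omega} with index $k+1$ now yields
\[
(k+1)\text{-}\ddell_{[T,U]}X_M\le(k+1)\text{-}\ddell_BM,
\]
and combining the two displays proves the second line of the corollary.

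The main obstacle is identifying $\Omega_{T,U}X$ with $X_M$ as objects, not just stably. If only a stable isomorphism is available, I would check that $k\text{-}\ddell$ depends only on the stable class up to projective summands. One route is to add a projective summand to $C_0$ in a witness, which leaves every delooping level unchanged. The cleaner route is to use minimal relative syzygies throughout, so that the isomorphism is literal. A smaller point is that the splicing in the lower bound needs exactness of $F$ on conflations only, which Proposition~\ref{prop:yoneda} provides.
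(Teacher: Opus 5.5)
Your proof is correct and follows the paper's argument. The middle and lower inequalities are handled identically. For the upper bound, the paper takes a minimal relative projective conflation, uses Proposition~\ref{prop:yoneda} and Lemma~\ref{lem:lift-omega} to bound $(k+1)\text{-}\ddell_{[T,U]}\Omega_{T,U}X$, and then splices explicitly. That splicing is just the inline proof of the third inequality of Lemma~\ref{lem:basic}, which you cite instead, and your identification $\Omega_{T,U}X\cong X_{F(X)}$ via minimal deflations and full faithfulness is the same one the paper uses implicitly.
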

\begin{proof}
By Lemma~\ref{lem:basic}, we obtain
\[
k\text{-}\ddell_{[T,U]}X\le k\text{-}\dell_{[T,U]}X.
\]

Suppose that $m=k\text{-}\ddell_{[T,U]}X<\infty$.
Then there is an admissibly exact sequence
\[
0\longrightarrow C_t\longrightarrow\cdots
\longrightarrow C_0\longrightarrow X\longrightarrow0
\]
and
\[
(k+i)\text{-}\dell_{[T,U]}C_i\le m-i,
\]
where $t\le m$ and $0\le i\le t$.
By Proposition~\ref{prop:object-ordinary}, we have
\[
(k+i)\text{-}\dell_B F(C_i)
\le (k+i)\text{-}\dell_{[T,U]}C_i
\le m-i.
\]
Applying $F$ to the sequence above yields an exact sequence in mod$B$
\[
0\longrightarrow F(C_t)\longrightarrow\cdots
\longrightarrow F(C_0)\longrightarrow F(X)\longrightarrow0.
\]
Hence
\[
k\text{-}\ddell_B F(X)\le m
=k\text{-}\ddell_{[T,U]}X.
\]

In either comparison, an infinite right-hand side gives a trivial inequality.

If $(k+1)\text{-}\ddell_B F(X)=\infty$, the inequality is immediate.
Suppose that
\[
m=(k+1)\text{-}\ddell_B F(X)<\infty.
\]
Since $\mathcal H_{T,U}$ has enough relative projectives, there is a
conflation
\[
0\longrightarrow K\longrightarrow P_0
\xrightarrow{p_0}X\longrightarrow0,
\qquad P_0\in\add P_{T,U}, K\in\mathcal H_{T,U}.
\]
Removing redundant projective summands gives a decomposition
$P_0\cong P_X\oplus Q$ with $p_0=(p_X,0)$ and $p_X$ right minimal.
Then
\[
K\cong\ker p_X\oplus Q.
\]
Because $\mathcal H_{T,U}$ is closed under direct summands,
$\ker p_X$ still belongs to $\mathcal H_{T,U}$.
Thus we obtain a minimal relative projective deflation and its conflation
\[
0\longrightarrow\Omega_{T,U}X
\longrightarrow P_X\xrightarrow{p_X}X\longrightarrow0.
\]
By Proposition~\ref{prop:yoneda},
\[
F(\Omega_{T,U}X)\cong\Omega_B F(X).
\]
Applying Lemma~\ref{lem:lift-omega}, we obtain
\[
(k+1)\text{-}\ddell_{[T,U]}(\Omega_{T,U}X)
\le (k+1)\text{-}\ddell_B F(X)=m.
\]
Consequently, there is an admissibly exact sequence
\[
0\longrightarrow C_t\longrightarrow\cdots
\longrightarrow C_0\longrightarrow\Omega_{T,U}X
\longrightarrow0
\]
 and
\[
(k+1+i)\text{-}\dell_{[T,U]}C_i\le m-i,
\]
where $t\le m$ and $0\le i\le t$.
Splicing this sequence with the relative projective conflation above gives
an admissibly exact sequence
\[
0\longrightarrow C_t\longrightarrow\cdots
\longrightarrow C_0\longrightarrow P_X
\longrightarrow X\longrightarrow0.
\]
It is easy to see
\[
k\text{-}\dell_{[T,U]}P_X=0\le m+1.
\]
Each original term $C_i$ is now in position $i+1$, so
\[
\bigl(k+(i+1)\bigr)\text{-}\dell_{[T,U]}C_i
\le m-i=(m+1)-(i+1).
\]
Moreover, the new length is $t+1\le m+1$.
Then
\[
k\text{-}\ddell_{[T,U]}X
\le m+1
=1+(k+1)\text{-}\ddell_B F(X).
\]

\end{proof}

Combining the objectwise comparison with the projective-cover estimate gives the following
comparison of the global derived delooping levels.
\begin{proposition}[Global comparison with a degree shift]\label{thm:global-derived-comparison}
Let $T\le U$ be basic support $\tau$-tilting modules, and put
$B=\End_A(P_{T,U})$. For every integer $k\ge1$,
\begin{align*}
\operatorname{glo}\text{-}k\text{-}\ddell B
&\le1+\operatorname{glo}\text{-}(k+1)\text{-}\ddell[T,U],\\
\operatorname{glo}\text{-}k\text{-}\ddell[T,U]
&\le1+\operatorname{glo}\text{-}(k+1)\text{-}\ddell B.
\end{align*}
\end{proposition}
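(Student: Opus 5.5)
Both inequalities follow by taking suprema of objectwise estimates already established. The objectwise input is Corollary~\ref{thm:object}, the one-step shift from Lemma~\ref{lem:basic}, and the lifting of syzygies via the objects $X_M$ fixed before Lemma~\ref{lem:lift-omega}. Throughout, I treat the case where the relevant right-hand side is infinite as trivial. I also use the paper's empty-supremum conventions, so that a zero heart (where $B=0$) is consistent.

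\emph{Second inequality.} Let $X\in\mathcal H_{T,U}$. Corollary~\ref{thm:object} gives
\[
k\text{-}\ddell_{[T,U]}X\le 1+(k+1)\text{-}\ddell_BF(X).
\]
Since $F(X)\in\modu B$, the right-hand side is at most $1+\operatorname{glo}\text{-}(k+1)\text{-}\ddell B$. Taking the supremum over $X\in\mathcal H_{T,U}$ yields the claim.

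\emph{First inequality.} Let $M\in\modu B$. I apply Lemma~\ref{lem:basic} to the exact category $\E=\modu B$. This is legitimate because $\modu B$ has $\operatorname{proj}B=\add B_B$ as Ext-progenerators and is Hom-finite, Krull--Schmidt and idempotent complete. The lemma gives
\[
k\text{-}\ddell_BM\le 1+(k+1)\text{-}\ddell_B\Omega_BM .
\]
By construction $\Omega_BM\cong F(X_M)$ with $X_M\in\mathcal H_{T,U}$, which is possible by Proposition~\ref{prop:resolving}. The first line of Corollary~\ref{thm:object}, applied with $k+1$ in place of $k$, then gives
\[
(k+1)\text{-}\ddell_BF(X_M)\le (k+1)\text{-}\ddell_{[T,U]}X_M\le\operatorname{glo}\text{-}(k+1)\text{-}\ddell[T,U].
\]
Combining the two displays and taking the supremum over $M\in\modu B$ gives the first inequality.

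The only point needing care is the applicability of Lemma~\ref{lem:basic} to the algebra case $\E=\modu B$, together with the fact that the derived levels there coincide with the usual ones $k\text{-}\ddell_B$. This holds because the definitions of \cite{GLZ} specialize to the module-category definitions when all short exact sequences are conflations. Beyond that, the argument is a formal chaining of earlier results, and I do not expect a genuine obstacle.
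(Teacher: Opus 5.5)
Your proposal is correct and follows the paper's proof. For the second inequality, both take the supremum over the heart of the second line of Corollary~\ref{thm:object}. For the first, both combine the third inequality of Lemma~\ref{lem:basic} in $\modu B$ with $\Omega_BM\cong F(X_M)$ and the first line of Corollary~\ref{thm:object} at level $k+1$, then take the supremum over $M\in\modu B$.
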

\begin{proof}
For every $M\in\modu B$, choose $X_M\in\mathcal H_{T,U}$ with
$F(X_M)\cong\Omega_BM$.
Lemma~\ref{lem:basic} and
Corollary~\ref{thm:object} give
\begin{align*}
k\text{-}\ddell_BM
&\le1+(k+1)\text{-}\ddell_B(\Omega_BM)\\
&=1+(k+1)\text{-}\ddell_BF(X_M)\\
&\le1+(k+1)\text{-}\ddell_{[T,U]}X_M\\
&\le1+\operatorname{glo}\text{-}(k+1)\text{-}\ddell[T,U].
\end{align*}
Taking the supremum over $M\in\modu B$ proves the first inequality.
For every $X\in\mathcal H_{T,U}$, Corollary~\ref{thm:object} gives
\[
k\text{-}\ddell_{[T,U]}X
\le1+(k+1)\text{-}\ddell_BF(X)
\le1+\operatorname{glo}\text{-}(k+1)\text{-}\ddell B.
\]
Taking the supremum over $X\in\mathcal H_{T,U}$ proves the second inequality.
\end{proof}

\subsection{A finite-test invariant}
 We now consider the algebra invariant
$k\text{-}\ddell B$, which is computed on simple $B$-modules.
For simple modules in $\operatorname{Im}F$, we use their lifts to the heart.
The remaining simple modules are treated through the lifts of the radicals
of their projective covers.

Fix a decomposition  $P=\bigoplus_{i=1}^nP_i$  into pairwise nonisomorphic indecomposable summands. Put $Q_i=F(P_i)$ and $L_i=\topm Q_i$. The modules $Q_1,\cdots, Q_n$ are the indecomposable projective right $B$-modules, and $L_1,\cdots, L_n$  form a complete list simple right $B$-modules.
Define
\[
I=I(T,U)=\{i\mid L_i\in\operatorname{Im}F\}.
\]
For $i\in I$, choose $S_i\in\mathcal H_{T,U}$ with $F(S_i)\cong L_i$. For $j\notin I$, choose $N_j\in\mathcal H_{T,U}$ with $F(N_j)\cong\rad Q_j$. Existence follows from Proposition~\ref{prop:resolving}; full faithfulness gives uniqueness up to isomorphism.

Using the test objects $S_i$ and $N_j$ above, we first
recall the  $Xu$--$Zhang$ $k$-delooping level of the interval.
\begin{definition}\label{def:ordinary-tests}\cite[Definition 3.9]{CXZ}
For $k\ge1$, define the $Xu$--$Zhang$ $k$-delooping level of the interval by
\begin{equation*}
k\text{-}\dell^{\mathrm{XZ}}[T,U]
=\max\left\{
\begin{array}{l}
\displaystyle\sup_{i\in I}k\text{-}\dell_{[T,U]}S_i,\\[3pt]
\displaystyle\sup_{j\notin I}
\bigl(1+(k+1)\text{-}\dell_{[T,U]}N_j\bigr)
\end{array}\right\}.
\end{equation*}

Now, we define the derived
analogue using the same test objects and replacing
the ordinary relative delooping levels with their
derived counterparts.
\end{definition}
\begin{definition}\label{def:tests}
For $k\ge1$, define the $Xu$--$Zhang$ $k$-derived delooping level of the interval by
\begin{equation*}
k\text{-}\ddell^{\mathrm{XZ}}[T,U]
=\max\left\{
\begin{array}{l}
\displaystyle\sup_{i\in I}k\text{-}\ddell_{[T,U]}S_i,\\[3pt]
\displaystyle\sup_{j\notin I}
\bigl(1+(k+1)\text{-}\ddell_{[T,U]}N_j\bigr)
\end{array}\right\}.
\end{equation*}

\end{definition}
For $T=0$, the objects $S_i$ and $N_j$ are the simple-image
and radical-correction objects of \cite[Definition4.4(2)]{XZ}.
Thus the definition recovers the derived  finite-test invariant in \cite[Definition 4.7]{GLZ}.

The following theorem gives comparison bounds for the derived finite-test invariant of the
interval. These bounds also yield an upper bound for the big finitistic dimension of $B^{\op}$.
\begin{theorem}\label{thm:canonical}
For every $k\ge1$,
\begin{align*}
k\text{-}\ddell B
&\le k\text{-}\ddell^{\mathrm{XZ}}[T,U]
\le k\text{-}\dell^{\mathrm{XZ}}[T,U],
\\
k\text{-}\ddell^{\mathrm{XZ}}[T,U]
&\le
\operatorname{glo}\text{-}(k+1)\text{-}\ddell[T,U]+1,
\\
k\text{-}\ddell^{\mathrm{XZ}}[T,U]
&\le 1+(k+1)\text{-}\ddell B.
\end{align*}
In particular,
\begin{equation*}
\findim B^{\op}
\le \Findim B^{\op}
\le \ddell B
\le \ddell^{\mathrm{XZ}}[T,U].
\end{equation*}
\end{theorem}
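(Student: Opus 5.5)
We compare the two families of test terms in Definition~\ref{def:tests} with the simple $B$-modules one index at a time, and use Corollary~\ref{thm:object} and Lemma~\ref{lem:basic} throughout.

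\emph{First chain.} For $i\in I$, Corollary~\ref{thm:object} gives
\[
k\text{-}\ddell_BL_i=k\text{-}\ddell_BF(S_i)\le k\text{-}\ddell_{[T,U]}S_i .
\]
For $j\notin I$, the projective cover $Q_j\to L_j$ is minimal, so $\Omega_BL_j\cong\rad Q_j\cong F(N_j)$. The third inequality of Lemma~\ref{lem:basic}, applied in $\modu B$, together with Corollary~\ref{thm:object} then gives
\[
k\text{-}\ddell_BL_j\le1+(k+1)\text{-}\ddell_BF(N_j)\le1+(k+1)\text{-}\ddell_{[T,U]}N_j .
\]
Taking the maximum over all simples yields $k\text{-}\ddell B\le k\text{-}\ddell^{\mathrm{XZ}}[T,U]$. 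The comparison with $k\text{-}\dell^{\mathrm{XZ}}[T,U]$ is termwise: apply $k\text{-}\ddell_{[T,U]}\le k\text{-}\dell_{[T,U]}$ from Lemma~\ref{lem:basic} to each $S_i$, and with $k+1$ in place of $k$ to each $N_j$.

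\emph{Global bound.} For $j\notin I$ the term $1+(k+1)\text{-}\ddell_{[T,U]}N_j$ is at most $1+\operatorname{glo}\text{-}(k+1)\text{-}\ddell[T,U]$. For $i\in I$, Lemma~\ref{lem:basic} gives $k\text{-}\ddell_{[T,U]}S_i\le(k+1)\text{-}\ddell_{[T,U]}S_i$, which is at most the global $(k+1)$-level, hence at most that level plus one.

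\emph{Bound by $1+(k+1)\text{-}\ddell B$.} For $i\in I$, Corollary~\ref{thm:object} gives
\[
k\text{-}\ddell_{[T,U]}S_i\le1+(k+1)\text{-}\ddell_BL_i\le1+(k+1)\text{-}\ddell B .
\]
The main obstacle is the radical terms with $j\notin I$: we must show $(k+1)\text{-}\ddell_{[T,U]}N_j\le(k+1)\text{-}\ddell_BL_j$. Corollary~\ref{thm:object} alone would lose a further $+1$. Instead, use Lemma~\ref{lem:lift-omega} with $M=L_j$. Since $F(X_{L_j})\cong\Omega_BL_j\cong F(N_j)$, full faithfulness gives $X_{L_j}\cong N_j$, and therefore
\[
(k+1)\text{-}\ddell_{[T,U]}N_j\le(k+1)\text{-}\ddell_BL_j\le(k+1)\text{-}\ddell B .
\]
Adding one gives the required bound. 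One should check that Lemma~\ref{lem:lift-omega} really produces admissibly exact sequences in the heart. This holds because the horseshoe sequence of Lemma~\ref{lem:horseshoe} lies in $\Omega_B(\modu B)\subseteq\operatorname{Im}F$, and $\operatorname{Im}F$ is closed under submodules by Proposition~\ref{prop:resolving}.

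\emph{Finitistic consequence.} The inequality $\findim B^{\op}\le\Findim B^{\op}$ is standard. The inequality $\Findim B^{\op}\le\ddell B$ is the Guo--Igusa theorem \cite{GI}. The last inequality, $\ddell B\le\ddell^{\mathrm{XZ}}[T,U]$, is the first chain with $k=1$.
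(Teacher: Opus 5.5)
Your proof is correct and follows the paper's argument step by step. This includes the key use of Lemma~\ref{lem:lift-omega} with $M=L_j$ and parameter $k+1$, which avoids losing an extra $+1$ on the radical terms. The only difference is in the first chain for $j\notin I$. There you combine the third inequality of Lemma~\ref{lem:basic} in $\modu B$ with Corollary~\ref{thm:object}, as the paper itself does in Proposition~\ref{thm:global-derived-comparison}. The paper's proof instead splices an admissible witness for $N_j$ with $0\to F(N_j)\to Q_j\to L_j\to0$ directly and applies Proposition~\ref{prop:object-ordinary}. These are the same argument packaged differently.
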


\begin{proof}
For $i\in I$, Corollary~\ref{thm:object} gives
\[
k\text{-}\ddell_B L_i
=
k\text{-}\ddell_B F(S_i)
\le
k\text{-}\ddell_{[T,U]}S_i.
\]

Now let $j\notin I$. The projective cover $Q_j\to L_j$ gives
\[
\Omega_B L_j
\cong \rad Q_j
\cong F(N_j).
\]
We claim that
\[
k\text{-}\ddell_B L_j
\le
1+(k+1)\text{-}\ddell_{[T,U]}N_j.
\]
If the right-hand side is infinite, the inequality is immediate.
Otherwise, put
\[
m=(k+1)\text{-}\ddell_{[T,U]}N_j<\infty.
\]
By definition, there is an admissibly exact sequence
\[
0\longrightarrow C_t\longrightarrow\cdots
\longrightarrow C_0\longrightarrow N_j\longrightarrow0
\]
and
\[
(k+1+i)\text{-}\dell_{[T,U]}C_i
\le m-i,
\]
where  $t\le m$ and $0\le i\le t$.
Applying the exact functor $F$ and splicing with
\[
0\longrightarrow F(N_j)\longrightarrow Q_j
\longrightarrow L_j\longrightarrow0
\]
yields an exact sequence
\[
0\longrightarrow F(C_t)\longrightarrow\cdots
\longrightarrow F(C_0)\longrightarrow Q_j
\longrightarrow L_j\longrightarrow0.
\]
In this sequence, $Q_j$ is in position zero and $F(C_i)$
is in position $i+1$. Since $Q_j$ is projective,
\[
k\text{-}\dell_B Q_j=0\le m+1.
\]
Moreover, Proposition~\ref{prop:object-ordinary} gives
\[
\begin{aligned}
\bigl(k+(i+1)\bigr)\text{-}\dell_B F(C_i)
&\le (k+1+i)\text{-}\dell_{[T,U]}C_i\\
&\le m-i\\
&=(m+1)-(i+1).
\end{aligned}
\]
The new length is $t+1\le m+1$. Thus
\[
k\text{-}\ddell_B L_j
\le m+1
=
1+(k+1)\text{-}\ddell_{[T,U]}N_j.
\]
Taking the maximum over all simple $B$-modules proves
\[
k\text{-}\ddell B
\le k\text{-}\ddell^{\mathrm{XZ}}[T,U].
\]
By Lemma \ref{lem:basic},
the second
inequality in the first displayed chain is proved.

For the global bound, Lemma~\ref{lem:basic} gives
\[
\begin{aligned}
k\text{-}\ddell_{[T,U]}S_i
&\le (k+1)\text{-}\ddell_{[T,U]}S_i\\
&\le
\operatorname{glo}\text{-}(k+1)\text{-}\ddell[T,U], ~~~i\in I.
\end{aligned}
\]
For $j\notin I$, we also have
\[
1+(k+1)\text{-}\ddell_{[T,U]}N_j
\le
1+\operatorname{glo}\text{-}(k+1)\text{-}\ddell[T,U].
\]
Taking the maximum of the two families proves
 $$k\text{-}\ddell^{\mathrm{XZ}}[T,U]
\le
\operatorname{glo}\text{-}(k+1)\text{-}\ddell[T,U]+1.$$

Corollary~\ref{thm:object} gives
\[
k\text{-}\ddell_{[T,U]}S_i
\le
1+(k+1)\text{-}\ddell_B L_i,~~~i\in I.
\]
For $j\notin I$, apply Lemma~\ref{lem:lift-omega}
with $M=L_j$ and parameter $k+1$.
Since
\[
F(N_j)\cong\Omega_B L_j,
\]
the full faithfulness of $F$ identifies $N_j$ with the object
$X_{L_j}$ in that lemma. Hence
\[
(k+1)\text{-}\ddell_{[T,U]}N_j
\le
(k+1)\text{-}\ddell_B L_j.
\]
Taking the maximum proves
$k\text{-}\ddell^{\mathrm{XZ}}[T,U]
\le
 1+(k+1)\text{-}\ddell B.$

Finally, combining the inequality in
\cite[Theorem 2.7]{GI}
\[
\Findim B^{\op}\le\ddell B
\]
with the case $k=1$ of the first displayed chain
proves the finitistic-dimension chain.
\end{proof}

\section{Wide intervals and finiteness criteria}\label{sec:finiteness}

\subsection{Wide intervals} The interval [Fac$T$, Fac$U$] is called wide when its heart is wide subcategory of mod$A$, following \cite{ES}. Since the endpoints are functorially finite, the heart is functorially finite, see \cite{AP,ES}.

\begin{theorem}[Wide interval equality]\label{thm:wide}
If $\mathcal H_{T,U}$ is wide in $\modu A$, then
\[
\begin{aligned}
k\text{-}\ddell_{[T,U]}X&=k\text{-}\ddell_BF(X),\\
k\text{-}\ddell^{\mathrm{XZ}}[T,U]&=k\text{-}\ddell B,\\
\operatorname{glo}\text{-}k\text{-}\ddell[T,U]
&=\operatorname{glo}\text{-}k\text{-}\ddell B.
\end{aligned}
\]
\end{theorem}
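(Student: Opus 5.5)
The plan is to use the last assertion of Proposition~\ref{prop:resolving}: when $\mathcal H_{T,U}$ is wide, $F=\Hom_A(P,-)\colon\mathcal H_{T,U}\to\modu B$ is an exact equivalence. Here "exact equivalence" should mean that the inherited exact structure on $\mathcal H_{T,U}$ corresponds to the abelian exact structure on $\modu B$. Concretely, $\operatorname{Im}F=\modu B$, and a sequence in the heart is a conflation exactly when its image under $F$ is short exact. I would first record what this transports. By Proposition~\ref{prop:yoneda}, $F$ identifies $\add P$ with $\operatorname{proj}B$, so it induces an equivalence of stable categories $\st{\mathcal H_{T,U}}\simeq\st{\modu B}$. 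It also commutes with syzygies, $F(\Omega_{T,U}^sX)\cong\Omega_B^sF(X)$.

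Next I would check that admissibly exact sequences correspond. The forward direction is already used in Corollary~\ref{thm:object}. For the converse, take an exact sequence $0\to D_t\to\cdots\to D_0\to F(X)\to0$ in $\modu B$ and split it into short exact pieces. Every term and every image now lies in $\modu B=\operatorname{Im}F$, so essential surjectivity gives preimages of all of them. Full faithfulness lifts the maps, and exactness of the equivalence in both directions makes each lifted short exact sequence a conflation in the heart. Wideness is used exactly here: without it, cokernels such as $L_j$ need not lie in $\operatorname{Im}F$.

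With these correspondences, stable retractions $\Omega^aX\rightleftarrows\Omega^{a+k}Y$ match for all $Y$, since every $B$-module is $F$ of something. This gives $k\text{-}\dell_{[T,U]}X=k\text{-}\dell_BF(X)$. Transporting witnesses for $k\text{-}\ddell$ in both directions then gives the first displayed equality. Taking suprema over $X$ and using essential surjectivity gives the global equality $\operatorname{glo}\text{-}k\text{-}\ddell[T,U]=\operatorname{glo}\text{-}k\text{-}\ddell B$.

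For the finite-test equality, every simple $L_i$ lies in $\operatorname{Im}F=\modu B$, so $I$ is the full index set and the radical-correction family is empty (empty suprema are zero). Hence $k\text{-}\ddell^{\mathrm{XZ}}[T,U]=\max_i k\text{-}\ddell_{[T,U]}S_i=\max_i k\text{-}\ddell_BL_i=k\text{-}\ddell B$ by the first equality. The main obstacle I expect is making precise that the equivalence of Proposition~\ref{prop:resolving} reflects exactness, so that lifted sequences really are conflations. If the cited statement is not explicit about this, I would argue directly. Given $0\to F(X')\to F(X)\to F(X'')\to0$ exact, take a projective deflation onto $X''$ and compare via $F$ to conclude that $X'\to X\to X''$ is the kernel–cokernel pair in $\modu A$. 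Alternatively, use that $F$ reflects monomorphisms and that wideness places the cokernel in the heart, where it must agree with $X''$ by full faithfulness.
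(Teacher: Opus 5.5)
Your proposal is correct and follows the same route as the paper. Wideness makes $F$ an exact equivalence $\mathcal H_{T,U}\simeq\modu B$ that transports projectives, syzygies, stable retractions and admissibly exact witnesses in both directions, and since every simple $B$-module lies in the image, the radical-correction terms disappear; your version just spells out the lifting of witnesses and the reflection of exactness that the paper's short proof leaves implicit.
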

\begin{proof} The inherited exact structure is now the abelian structure.
Ext-progenerator $P$ is a projective generator, and  $F$ is an exact equivalence. Both $F$ and its quasi-inverse preserve projectives,
syzygies, stable retractions and admissible witnesses, so the objectwise
levels agree. All simple $B$-modules are in the image, hence there are no
radical terms.
\end{proof}

\subsection{A finiteness criterion}

For $s\ge0$, set
\[
\mathcal C_s=\add\{\Omega_{T,U}^sX\oplus P'
\mid X\in\mathcal H_{T,U},\ P'\in\add P\},
\qquad \mathcal{L}_s(T,U)=\operatorname{ind}\mathcal C_s.
\]
In particular, $\mathcal C_0=\mathcal H_{T,U}$. The sets of indecomposable
classes descend, and every stratum contains $\add P$.

\begin{lemma}\label{lem:stabilization}\cite[Lemma 4.2]{CXZ}
Suppose $L_s(T,U)$ is finite for some $s\geq 0$ and put
\[
n=\min\{r\ge s\mid \mathcal{L}_r(T,U)=\mathcal{L}_{r+1}(T,U)\}.
\]
Then $n$
exists, and \[
n=\max\{s,\operatorname{glo}\text{-}\dell[T,U]\}.
\]
 If  $m_s$ is
the number of nonprojective classes in $\mathcal{L}_s(T,U)$ represented by objects outside $\mathrm{add}$P, then $n\le s+m_s$.
\end{lemma}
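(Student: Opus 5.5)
The plan proves the three assertions of Lemma~\ref{lem:stabilization} in order: existence of $n$, the identity $n=\max\{s,\operatorname{glo}\text{-}\dell[T,U]\}$, and the bound $n\le s+m_s$. All arguments take place in the stable category $\st{\mathcal H}_{T,U}$. We first record two monotonicity facts.

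\emph{Nesting.} Since $\Omega_{T,U}^{r+1}X=\Omega_{T,U}^r(\Omega_{T,U}X)$ and $\Omega_{T,U}X\in\mathcal H_{T,U}$, we get $\mathcal C_{r+1}\subseteq\mathcal C_r$.

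\emph{Propagation.} If $\mathcal L_r=\mathcal L_{r+1}$, then $\mathcal L_{r+1}=\mathcal L_{r+2}$. Take an indecomposable nonprojective summand $Z$ of $\Omega^{r+1}X$. Then $Z$ is a summand of $\Omega\,\Omega^rX$, and $\Omega^rX$ decomposes into summands of objects $\Omega^{r+1}Y_\alpha$ plus projectives, because $\mathcal C_r=\mathcal C_{r+1}$. Hence $Z$ is a summand of $\Omega^{r+2}(\bigoplus_\alpha Y_\alpha)$ modulo projectives. This uses that minimal syzygies commute with direct sums and that $\Omega$ is additive on the stable category.

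\emph{Existence of $n$.} Because $\mathcal L_s$ is finite and the chain $\mathcal L_s\supseteq\mathcal L_{s+1}\supseteq\cdots$ is descending, it must stabilize. Each strict step removes at least one nonprojective class; the projective classes $\operatorname{ind}\add P$ are never removed. Strict descent can therefore happen at most $m_s$ times, which gives $n\le s+m_s$.

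\emph{The inequality $\operatorname{glo}\text{-}\dell[T,U]\le n$.} Let $X\in\mathcal H_{T,U}$. Then $\Omega^nX\in\mathcal C_n=\mathcal C_{n+1}$, and by propagation $\mathcal C_n=\mathcal C_{n+1}$. So each indecomposable nonprojective summand $Z$ of $\Omega^nX$ is a summand of some $\Omega^{n+1}Y_Z$. Summing over the summands $Z$ exhibits $\Omega^nX$ as a stable retract of $\Omega^{n+1}(\bigoplus_Z Y_Z)$. Hence $\dell_{[T,U]}X\le n$.

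\emph{The inequality $n\le\max\{s,g\}$, where $g=\operatorname{glo}\text{-}\dell[T,U]$.} Assume $g<\infty$ and set $r=\max\{s,g\}$. It suffices to show $\mathcal L_r\subseteq\mathcal L_{r+1}$. Let $Z$ be an indecomposable nonprojective summand of $\Omega^rX$, so that $Z\cong\Omega^{r-a}(\Omega^aX)$ up to summands, where $a=\dell X\le g\le r$. By definition of the delooping level, $\Omega^aX$ is a stable retract of $\Omega^{a+1}Y$. Applying $\Omega^{r-a}$, which preserves stable retracts, shows that $\Omega^rX$ is a stable retract of $\Omega^{r+1}Y$. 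By Krull--Schmidt in the stable category, and since $\st{\mathcal H}$ is Hom-finite so that endomorphism rings are local, $Z$ is then a summand of $\Omega^{r+1}Y$ up to projectives. Hence $Z\in\mathcal L_{r+1}$.

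\emph{Conclusion.} Since $n$ is the least $r\ge s$ with stabilization, $n\le\max\{s,g\}$. The reverse inequality $n\ge s$ is by definition, and $n\ge g$ was shown above. This gives $n=\max\{s,g\}$.

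\emph{Main obstacle.} The delicate step is converting stable retracts into genuine summands modulo $\add P$. This uses Krull--Schmidt and idempotent completeness of the heart, which lift idempotents of the stable category and separate out the projective summands. Everything else reduces to bookkeeping with minimal relative syzygies, which Proposition~\ref{prop:yoneda} controls.
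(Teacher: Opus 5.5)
Your proof is correct. The paper gives no argument of its own here, since the lemma is imported from \cite[Lemma 4.2]{CXZ}. What you wrote is the standard stabilization argument: you show that $\mathcal L_r=\mathcal L_{r+1}$ holds exactly when $r\ge g=\operatorname{glo}\text{-}\dell[T,U]$, which gives $n=\max\{s,g\}$, and counting strict descents gives $n\le s+m_s$.

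Two small tidy-ups:
\begin{enumerate}
\item The propagation step is never used. The equality $\mathcal C_n=\mathcal C_{n+1}$ holds by the definition of $n$, and $g\le n$ makes your assumption $g<\infty$ automatic.
\item Justify the passage from stable retracts to honest summands by Krull--Schmidt in $\mathcal H_{T,U}$ rather than Hom-finiteness of $\st{\mathcal H}_{T,U}$. An indecomposable $Z\notin\add P$ has local endomorphism ring, and the endomorphisms factoring through $\add P$ lie in its radical. So a stable retract $Z$ of $\bigoplus_j W_j$ (with $W_j$ indecomposable) is stably isomorphic to a single nonprojective $W_j$. Such a stable isomorphism lifts to a genuine isomorphism.
\end{enumerate}
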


\begin{corollary}[Derived bounds from finite syzygy strata]
\label{prop:finite-stratum}
Under the hypotheses of Lemma~\ref{lem:stabilization}, let $n$ and $m_s$
be as defined there. Then, for every $k\ge1$,
\[
k\text{-}\ddell B
\le k\text{-}\ddell^{\mathrm{XZ}}[T,U]
\le n+1\le s+m_s+1.
\]
In particular,
\[
\Findim B^{\op}\le\ddell^{\mathrm{XZ}}[T,U]\le n+1.
\]
If $\mathcal H_{T,U}$ is wide, then
\[
k\text{-}\ddell B=k\text{-}\ddell^{\mathrm{XZ}}[T,U]
\le n\le s+m_s,
\qquad \Findim B^{\op}\le n.
\]
\end{corollary}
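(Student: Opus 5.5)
The plan is to reduce everything to the ordinary relative bounds and then invoke Theorem~\ref{thm:canonical} and Theorem~\ref{thm:wide}. The first inequality $k\text{-}\ddell B\le k\text{-}\ddell^{\mathrm{XZ}}[T,U]$ is already part of Theorem~\ref{thm:canonical}. So the main task is to bound $k\text{-}\ddell^{\mathrm{XZ}}[T,U]$ by $n+1$.

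By Theorem~\ref{thm:canonical}, $k\text{-}\ddell^{\mathrm{XZ}}[T,U]\le k\text{-}\dell^{\mathrm{XZ}}[T,U]$. It therefore suffices to show that every term in Definition~\ref{def:ordinary-tests} is at most $n+1$.

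The key step is to show $\operatorname{glo}\text{-}k\text{-}\dell[T,U]\le n$ for every $k\ge1$, not only for $k=1$. Lemma~\ref{lem:stabilization} gives $n\ge\operatorname{glo}\text{-}\dell[T,U]$, but the test objects involve the levels $k$ and $k+1$. I would argue directly from stabilization. Since $\mathcal L_n=\mathcal L_{n+1}$ and $\Omega_{T,U}$ sends $\mathcal C_r$ into $\mathcal C_{r+1}$, induction gives $\mathcal L_n=\mathcal L_{n+r}$ for all $r\ge0$. Here I use that each $\Omega_{T,U}$ applied to a representative of a class in $\mathcal L_{r}$ yields, up to projective summands, objects whose indecomposable summands lie in $\mathcal L_{r+1}$, and that equality at one step propagates. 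Then for $X\in\mathcal H_{T,U}$, each indecomposable nonprojective summand $Z$ of $\Omega^n_{T,U}X$ lies in $\mathcal L_{n+k}$. So $Z$ is a summand of some $\Omega^{n+k}_{T,U}Y\oplus P'$, and hence a stable retract of $\Omega^{n+k}_{T,U}Y$. Taking direct sums over these summands, and using additivity of $\Omega_{T,U}$ on the stable category, gives $k\text{-}\dell_{[T,U]}X\le n$. The direct-sum step uses $Y=\bigoplus Y_Z$.

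With this in hand, $k\text{-}\dell_{[T,U]}S_i\le n$ and $1+(k+1)\text{-}\dell_{[T,U]}N_j\le n+1$. Hence $k\text{-}\dell^{\mathrm{XZ}}[T,U]\le n+1$. The inequality $n\le s+m_s$ comes from Lemma~\ref{lem:stabilization}. The finitistic statement follows from the case $k=1$ together with $\Findim B^{\op}\le\ddell B$ in Theorem~\ref{thm:canonical}.

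For the wide case, Theorem~\ref{thm:wide} gives $k\text{-}\ddell B=k\text{-}\ddell^{\mathrm{XZ}}[T,U]$. Since all simples lie in $\operatorname{Im}F$, we have $I$ equal to everything and no radical terms occur. Thus $k\text{-}\ddell^{\mathrm{XZ}}[T,U]=\sup_i k\text{-}\ddell_{[T,U]}S_i\le\sup_i k\text{-}\dell_{[T,U]}S_i\le n$ by Lemma~\ref{lem:basic} and the key step. The bound on $\Findim B^{\op}$ then follows as before.

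The main obstacle is the key step: making rigorous that stabilization of indecomposable classes at step $n$ persists to all $r\ge n$, so that it yields the $k$-level bound for arbitrary $k$. If the propagation argument is delicate, my fallback is to cite the proof of Lemma~\ref{lem:stabilization} in \cite{CXZ}, which presumably establishes $\mathcal L_n=\mathcal L_{n+r}$. I would note that $\Omega_{T,U}$ induces a map on indecomposable nonprojective classes that is surjective from $\mathcal L_r$ onto $\mathcal L_{r+1}\setminus\operatorname{ind}\add P$. On a finite set, equality $\mathcal L_n=\mathcal L_{n+1}$ then forces this map to be a bijection of $\mathcal L_n\setminus\operatorname{ind}\add P$ to itself, which propagates to all later strata.
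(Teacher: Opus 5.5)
Your proposal is correct, and its skeleton matches the paper's. You reduce to the ordinary finite-test invariant via Theorem~\ref{thm:canonical}, take $n\le s+m_s$ from Lemma~\ref{lem:stabilization}, use $\Findim B^{\op}\le\ddell B$, and in the wide case use Theorem~\ref{thm:wide} together with the absence of radical terms.

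The real difference is where the ordinary bound $k\text{-}\dell^{\mathrm{XZ}}[T,U]\le n+1$ (and $\le n$ for a wide heart) comes from. The paper simply cites \cite[Theorem 4.3]{CXZ}. You instead derive it from the stronger statement $\operatorname{glo}\text{-}k\text{-}\dell[T,U]\le n$ for all $k\ge1$, obtained by propagating $\mathcal L_n=\mathcal L_{n+1}$ to $\mathcal L_n=\mathcal L_{n+k}$.

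That propagation is sound, but you should state its actual reason: $\mathcal L_{r+1}$ is determined by $\mathcal L_r$. Precisely, $\mathcal L_{r+1}$ is $\operatorname{ind}\add P$ together with the indecomposable summands of $\Omega_{T,U}Z$ for $Z\in\mathcal L_r$. This needs the reverse inclusion as well as the one you mention. Every nonprojective indecomposable summand of $\Omega_{T,U}(\Omega^r_{T,U}X)$ is a summand of $\Omega_{T,U}Z$ for some indecomposable summand $Z$ of $\Omega^r_{T,U}X$, by additivity of minimal syzygies and Krull--Schmidt.

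Drop the fallback argument. $\Omega_{T,U}$ does not induce a map on indecomposable classes, because the syzygy of an indecomposable may decompose or be projective, so no bijection is forced.

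As for what each route buys: the paper's is shorter but rests entirely on the external theorem. Yours is self-contained and gives a uniform bound on all ordinary $k$-levels. Combined with Lemma~\ref{lem:basic}, it even gives $\operatorname{glo}\text{-}k\text{-}\ddell[T,U]\le n$. Then the second inequality of Theorem~\ref{thm:canonical} reproduces the bound $n+1$, and it becomes clear that the extra $1$ comes only from the radical correction terms.
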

\begin{proof}
By \cite[Theorem 4.3]{CXZ}, the ordinary finite-test invariant satisfies
\[
k\text{-}\dell^{\mathrm{XZ}}[T,U]\le n+1.
\]
Combining this with Theorem~\ref{thm:canonical} gives
\[
k\text{-}\ddell B\le k\text{-}\ddell^{\mathrm{XZ}}[T,U]
\le k\text{-}\dell^{\mathrm{XZ}}[T,U]\le n+1.
\]
The inequality $n\le s+m_s$ follows from Lemma~\ref{lem:stabilization}.
Moreover, the  inequality \cite[Theorem 2.7]{GI} yields
\[
\Findim B^{\op}\le\ddell B\le\ddell^{\mathrm{XZ}}[T,U].
\]
If the heart is wide, all simple $B$-modules belong to $\operatorname{Im}F$,
so the radical correction terms are absent. The ordinary finite-test
bound therefore improves to $n$ by \cite[Theorem 4.3]{CXZ}.
Theorem~\ref{thm:wide}  gives the
derived bounds.
\end{proof}

\begin{corollary}[Finite representation type]\label{cor:finite-heart}
If $\mathcal H_{T,U}$ is representation finite with $m$ relatively nonprojective
indecomposable classes, then
\[
k\text{-}\ddell^{\mathrm{XZ}}[T,U]\le m+1,
\qquad\Findim B^{\op}\le m+1.
\]
For a wide heart these bounds improve to $m$. In particular, the
conclusions hold for every interval when $A$ is representation finite.
\end{corollary}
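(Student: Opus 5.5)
The plan is to reduce the statement to Corollary~\ref{prop:finite-stratum} with $s=0$. If $\mathcal H_{T,U}$ is representation finite, then $\mathcal C_0=\mathcal H_{T,U}$ has only finitely many indecomposable classes. Hence $\mathcal L_0(T,U)$ is finite, and the hypotheses of Lemma~\ref{lem:stabilization} hold with $s=0$.

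First I identify $m_0$ with $m$. By definition, $m_0$ counts the nonprojective classes in $\mathcal L_0(T,U)=\operatorname{ind}\mathcal H_{T,U}$ represented outside $\add P$. By Theorem~\ref{thm:ext-progenerator} we have $\Proj\mathcal H_{T,U}=\add P$, so these are exactly the relatively nonprojective indecomposable classes, and $m_0=m$. Lemma~\ref{lem:stabilization} then gives a stabilization index $n\le 0+m_0=m$.

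Next I apply Corollary~\ref{prop:finite-stratum}. It yields $k\text{-}\ddell^{\mathrm{XZ}}[T,U]\le n+1\le m+1$ for every $k\ge1$, and $\Findim B^{\op}\le\ddell^{\mathrm{XZ}}[T,U]\le m+1$. In the wide case the same corollary gives $k\text{-}\ddell^{\mathrm{XZ}}[T,U]=k\text{-}\ddell B\le n\le m$ and $\Findim B^{\op}\le n\le m$.

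For the final sentence: if $A$ is representation finite, then $\modu A$ has finitely many indecomposables up to isomorphism. Since $\mathcal H_{T,U}$ is a full subcategory of $\modu A$ closed under summands, every heart is representation finite, so the first part applies to every interval.

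The only point needing care is the bookkeeping of $m_0$ versus $m$: one must check that an indecomposable of the heart is relatively nonprojective exactly when it is outside $\add P$, which is Theorem~\ref{thm:ext-progenerator}. Everything else is direct substitution, so I expect no real obstacle.
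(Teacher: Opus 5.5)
Your proposal is correct and is essentially the paper's proof: apply Corollary~\ref{prop:finite-stratum} with $s=0$, and use that a summand-closed full subcategory of a representation-finite module category is representation finite. Your explicit check that $m_0=m$, via $\Proj\mathcal H_{T,U}=\add P$, spells out a step the paper leaves implicit.
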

\begin{proof}
Apply Corollary~\ref{prop:finite-stratum} with $s=0$. A full summand-closed
subcategory of a representation-finite module category is representation finite.
\end{proof}

\subsection{The first stratum for ICE hearts}

A subcategory is ICE-closed when it is closed under images, cokernels
and extensions. An interval is  an ICE interval if its heart is ICE-closed. In this case Enomoto-Sakai
\cite[Corollary 4.19 and Proposition 4.14]{ES} show that there is a wide $\tau$-tilting module $P$ such that
$\mathcal W=\langle P\rangle_{\mathrm{wide}} $ and $\mathcal H_{T,U}=\Fac_{\mathcal W}P$.
The heart is a torsion class in this wide subcategory. Here
$\Sub_{\mathcal W}P$ consists of the objects of $\mathcal W$ admitting
a monomorphism in $\mathcal W$ into an object of $\add P$.

\begin{proposition}\label{prop:ice-first}\cite[Proposition 4.5]{CXZ}
Assume that $\mathcal H_{T,U}$  is ICE-closed, set $P=P_{T,U}$ and let $\mathcal W=\langle P\rangle_{\mathrm{wide}} $ be the canonical wide subcategory.
Then
\[
\mathrm{add}\mathcal{L}_1(T,U)=\mathcal H_{T,U}\cap\Sub_{\mathcal W}P.
\]
\end{proposition}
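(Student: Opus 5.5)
We must prove $\mathrm{add}\,\mathcal L_1(T,U)=\mathcal H_{T,U}\cap\Sub_{\mathcal W}P$, where $\mathcal C_1=\add\{\Omega_{T,U}X\oplus P'\}$. The plan is to prove the two inclusions separately, working throughout inside the wide subcategory $\mathcal W$. Since $\mathcal W$ is wide in $\modu A$, it is abelian and exact, with kernels, cokernels and images computed in $\modu A$. Since $\mathcal H_{T,U}=\Fac_{\mathcal W}P$ is a torsion class in $\mathcal W$, it is closed under quotients and extensions inside $\mathcal W$.

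\emph{Inclusion $\subseteq$.} Let $X\in\mathcal H_{T,U}$.
\begin{enumerate}
\item Take the minimal relative projective deflation $0\to\Omega_{T,U}X\to P_X\to X\to0$ from Theorem~\ref{thm:ext-progenerator}.
\item All three terms lie in $\mathcal H_{T,U}\subseteq\mathcal W$, so $\Omega_{T,U}X$ is a $\mathcal W$-subobject of $P_X\in\add P$. Hence $\Omega_{T,U}X\in\mathcal H_{T,U}\cap\Sub_{\mathcal W}P$.
\item Each $P'\in\add P$ trivially lies in this intersection.
\item The intersection is closed under finite sums and summands. For summands, use that $\mathcal W$ is summand closed and that a summand of a subobject of $P^n$ is again a subobject of $P^n$.
\end{enumerate}
This gives the inclusion.

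\emph{Inclusion $\supseteq$.} Let $Y\in\mathcal H_{T,U}$ with a monomorphism $\iota:Y\to P_0$ in $\mathcal W$, where $P_0\in\add P$.
\begin{enumerate}
\item Set $X=\operatorname{Cok}\iota$, computed in $\modu A$. Because $\mathcal W$ is wide, $X\in\mathcal W$.
\item Since $X$ is a quotient of $P_0\in\Fac_{\mathcal W}P=\mathcal H_{T,U}$, and torsion classes in $\mathcal W$ are closed under quotients, we get $X\in\mathcal H_{T,U}$. Equivalently, this follows directly from ICE-closedness of the heart, which closes it under cokernels.
\item Thus $0\to Y\to P_0\to X\to0$ is a conflation in $\mathcal H_{T,U}$ with $P_0$ relatively projective.
\item By Schanuel's lemma in the exact category, $Y\oplus P''\cong\Omega_{T,U}X\oplus P'$ for some $P',P''\in\add P$. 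Here $\Omega_{T,U}X$ is computed by the minimal deflation, and it agrees with $Y$ up to projective summands because the deflation $P_0\to X$ decomposes as a minimal one plus a split projective part, exactly as in the proof of Corollary~\ref{thm:object}.
\item Hence $Y$ is a summand of $\Omega_{T,U}X\oplus P'\in\mathcal C_1$, so $Y\in\add\mathcal L_1(T,U)$.
\end{enumerate}

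\emph{Main obstacle.} The delicate point is the identification of the ambient structure. One must check that a monomorphism in $\mathcal W$ is a monomorphism in $\modu A$, which holds since kernels in $\mathcal W$ agree with those in $\modu A$. One must also check that $\add P$ coincides with the Ext-projectives of $\Fac_{\mathcal W}P$, which is the Enomoto--Sakai statement that $P$ is wide $\tau$-tilting with $\mathcal H_{T,U}=\Fac_{\mathcal W}P$. I would invoke \cite[Corollary 4.19, Proposition 4.14]{ES} for both facts. Beyond that, the argument is Schanuel's lemma and closure of the heart under cokernels.
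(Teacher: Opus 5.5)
Your proof is correct. The paper gives no argument of its own here; it simply imports \cite[Proposition 4.5]{CXZ}. Your two inclusions are the natural direct proof. For $\subseteq$ you read the minimal conflation $0\to\Omega_{T,U}X\to P_X\to X\to0$ inside $\mathcal W$, and for $\supseteq$ you use closure of the ICE heart under cokernels followed by Schanuel's lemma. The only inputs you need are $\mathcal H_{T,U}\subseteq\mathcal W$ and $\Proj\mathcal H_{T,U}=\add P$ from Theorem~\ref{thm:ext-progenerator}.

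The inclusion $\mathcal H_{T,U}\subseteq\mathcal W$ does not even require Enomoto--Sakai. Each $X$ in the heart is the cokernel of the composite $P_K\to K\into P_X$ obtained from two relative projective conflations, with $P_K,P_X\in\add P$, and $\mathcal W$ is closed under cokernels.
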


\begin{corollary}\label{cor:ice-finite}
Suppose that the interval is ICE and that
$\mathcal H_{T,U}\cap\Sub_{\mathcal W}P$ is representation-finite,
with $m_1$ indecomposable isomorphism classes outside $\add P$.
Then, for every $k\ge1$,
\[
k\text{-}\ddell^{\mathrm{XZ}}[T,U]\le m_1+2,
\qquad\max\{\findim B,\Findim B^{\op}\}\le m_1+2.
\]
\end{corollary}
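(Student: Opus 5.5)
The plan is to feed Proposition~\ref{prop:ice-first} into Corollary~\ref{prop:finite-stratum} with $s=1$. By Proposition~\ref{prop:ice-first}, $\add\mathcal L_1(T,U)=\mathcal H_{T,U}\cap\Sub_{\mathcal W}P$. This category is representation finite by hypothesis, so $\mathcal L_1(T,U)$ is a finite set. Hence the hypotheses of Lemma~\ref{lem:stabilization} hold with $s=1$. The number $m_s$ in that lemma counts the nonprojective classes of $\mathcal L_1(T,U)$ represented outside $\add P$, so it is at most the given $m_1$. Lemma~\ref{lem:stabilization} then gives a stabilization index $n\le 1+m_1$.

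Next I apply Corollary~\ref{prop:finite-stratum}. For every $k\ge1$ it yields
\[
k\text{-}\ddell B\le k\text{-}\ddell^{\mathrm{XZ}}[T,U]\le n+1\le m_1+2.
\]
This is the first bound. The same corollary, together with \cite[Theorem 2.7]{GI} as used in Theorem~\ref{thm:canonical}, gives $\Findim B^{\op}\le\ddell B\le\ddell^{\mathrm{XZ}}[T,U]\le m_1+2$.

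It remains to bound $\findim B$, that is, right modules over $B$; this is the step I expect to be the main obstacle, since the derived delooping machinery only controls the opposite side. The plan is to use Lemma~\ref{lem:stabilization} once more: it gives $\operatorname{glo}\text{-}\dell[T,U]\le n$. Take a $B$-module $M$ with $\operatorname{pd}_BM=d<\infty$. By Proposition~\ref{prop:resolving}, $\Omega_BM\cong F(X)$ for some $X\in\mathcal H_{T,U}$. Proposition~\ref{prop:yoneda} then gives $F(\Omega^{s}_{T,U}X)\cong\Omega_B^{s+1}M$, so $X$ has finite relative projective dimension $d-1$ in the heart.

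The key point is that the relative syzygies $\Omega^{r}_{T,U}X$ with $r\ge n$ lie in a stable, finite set of indecomposables $\mathcal L_n$. So I want to argue, as in the standard Auslander-type finite-syzygy argument, that a nonprojective syzygy of finite projective dimension cannot occur beyond a bounded depth. Each nonprojective object of finite projective dimension $e$ in $\mathcal L_n$ has $e$ bounded by the maximum $e_0$ over the finitely many classes. Because every object of $\mathcal L_n$ is again an $n$-th syzygy, and hence a stable retract of arbitrarily high syzygies, a finite-projective-dimension nonprojective object would force $\operatorname{Ext}$ obstructions. This would show $\operatorname{pd}\Omega^{n}_{T,U}X=0$ whenever it is finite, giving $d-1\le n$.

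With this, $\findim B\le n+1\le m_1+2$. If this direct argument fails, I would instead invoke the known result that $\findim B\le n+1$ whenever the syzygy strata stabilize; \cite[Theorem 4.3]{CXZ} presumably records this already. Either way, combining the two bounds gives $\max\{\findim B,\Findim B^{\op}\}\le m_1+2$.
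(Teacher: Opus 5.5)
Your derivation of the first inequality and of $\Findim B^{\op}\le m_1+2$ matches the paper's. Proposition~\ref{prop:ice-first} makes $\mathcal L_1(T,U)$ finite with $m_s=m_1$, and Corollary~\ref{prop:finite-stratum} with $s=1$ gives the bound $n+1\le m_1+2$. The gap is in the bound for $\findim B$.

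\textbf{The claimed obstruction does not exist.} You argue that a nonprojective object of finite projective dimension cannot lie in the stable stratum $\mathcal L_n$. This is false, and so is your target inequality $\findim B\le n+1$.

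\emph{Counterexample.} Let $A$ be a basic radical-square-zero algebra with $\rad P_1\cong S_1\oplus S_4$, $\rad P_4\cong S_2$, $\rad P_2\cong S_3$ and $P_3=S_3$. Take $T=0$ and $U=A$. Then the heart is $\modu A$, which is ICE and even wide, and $B=A$.
\begin{itemize}
\item All syzygies are semisimple, and $\Omega S_1=S_1\oplus S_4$, $\Omega S_4=S_2$, $\Omega S_2=P_3$.
\item Hence $\mathcal L_1=\mathcal L_2$, so $n=1$ and $m_1=3$.
\item The module $Y=P_1/S_4$ has the minimal resolution $0\to P_3\to P_2\to P_4\to P_1\to Y\to 0$, so $\findim B\ge 3>n+1$.
\end{itemize}
Here $S_4$ and $S_2$ lie in $\mathcal L_n$ with finite projective dimension. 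They are summands of every higher syzygy of $S_1$, and there is no Ext contradiction because $S_1$ has infinite projective dimension.

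Your fallback does not rescue the argument. \cite[Theorem 4.3]{CXZ} bounds the ordinary finite-test invariant, which controls $\Findim B^{\op}$, not $\findim B$. The paper instead cites \cite[Corollary 5.2]{CXZ} for this bound.

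\textbf{How to fix it.} Count with $m_1$ instead of stabilizing with $n$; your remark about bounded projective dimensions of finitely many classes points this way. Suppose $\operatorname{pd}_BM=d\ge 3$ and $2\le j\le d-1$.
\begin{itemize}
\item By Proposition~\ref{prop:yoneda}, $\Omega_B^jM\cong F(\Omega_{T,U}^{j-1}X_M)$, and $\mathcal C_{j-1}\subseteq\mathcal C_1$.
\item $\Omega_B^jM$ has projective dimension $d-j\ge 1$, so it has an indecomposable summand of exactly that projective dimension.
\item Lifting these summands gives $d-2$ pairwise nonisomorphic indecomposables in $\mathcal L_1(T,U)$. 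They are distinguished by their projective dimensions, and they lie outside $\add P$.
\end{itemize}
Therefore $d-2\le m_1$, which gives $\findim B\le m_1+2$.
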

\begin{proof}
Apply Corollary~\ref{prop:finite-stratum} with $s=1$ for the derived
finite-test and opposite big finitistic-dimension bounds. The bound
for $\findim B$ follows from \cite[Corollary 5.2]{CXZ}.
\end{proof}

\subsection{Ext-injective cogenerators of interval hearts}

All injective objects in this subsection are taken with respect to the
inherited exact structure of $\mathcal H_{T,U}$. Thus $J$ is Ext-injective
if $\Ext^1_{\mathcal H_{T,U}}(X,J)=0$ for every $X$ in the heart. A
finite-sum admissible cogenerator, in the terminology of
\cite[Definition 4.13]{GLZ}, is an object into a finite direct sum of
which every heart object admits an inflation. We first show that our
support $\tau$-tilting endpoints supply such an object, with
Ext-injectivity as well.

\begin{proposition}[An Ext-injective cogenerator]\label{prop:heart-cogenerator}
There is a basic object $J\in\mathcal H_{T,U}$ such that
\[
\operatorname{Inj}\mathcal H_{T,U}=\add J.
\]
Every $X\in\mathcal H_{T,U}$ admits a conflation
\[
0\longrightarrow X\longrightarrow J^{\oplus a}
\longrightarrow Y\longrightarrow0
\]
for some integer $a\ge1$ and some $Y\in\mathcal H_{T,U}$. In particular,
the heart has enough injectives and $J$ is a finite-sum admissible cogenerator.
\end{proposition}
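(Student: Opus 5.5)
We construct $J$ dually to Theorem~\ref{thm:ext-progenerator}: the heart $\mathcal H_{T,U}=\Fac U\cap T^\perp$ is also the intersection of the torsion-free class $T^\perp=\Sub T'$, where $T'$ is the support $\tau^{-}$-tilting module dual to $T$, with $\Fac U$. Dualising the Enomoto--Sakai description, the candidate is
\[
J_0=t_{\Fac U}(T'),
\]
where $t_{\Fac U}$ is the torsion radical. Alternatively, apply the Ext-progenerator theorem to the interval $[\Fac DU', \Fac DT']$ in $\modu A^{\op}$ corresponding to $\mathcal H_{T,U}$ under $D$. Then $J_0$ is $D$ of the Ext-progenerator $U'/\tr_{T'}U'$ of the dual heart. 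Since $D$ is an exact duality between $\mathcal H_{T,U}$ and a heart of the same type over $A^{\op}$, it carries Ext-projectives to Ext-injectives and projective deflations to injective inflations. The plan is therefore to identify $D\mathcal H_{T,U}$ as an interval heart for $A^{\op}$ and then transport Theorem~\ref{thm:ext-progenerator}. We let $J$ be the basic module with $\add J=\add J_0$.

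\textbf{Key steps, in order.}
\begin{enumerate}
\item Recall that $D$ gives an anti-isomorphism $\operatorname{tors}A\to\operatorname{tors}A^{\op}$, $\mathcal T\mapsto D(\mathcal T^{\perp})$. It restricts to functorially finite torsion classes, hence to $s\tau\text{-}\operatorname{tilt}$, by Adachi--Iyama--Reiten.
\item Using step (1), show that $D(\Fac U\cap T^\perp)=\Fac \widetilde T\cap \widetilde U^{\perp}$, where $\widetilde T,\widetilde U\in s\tau\text{-}\operatorname{tilt}A^{\op}$ satisfy $\Fac\widetilde U=D(T^\perp)\supseteq D(U^\perp)=\Fac\widetilde T$. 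The interval is reversed, and both endpoints remain functorially finite.
\item Apply Theorem~\ref{thm:ext-progenerator} to $[\widetilde U,\widetilde T]$ over $A^{\op}$. This gives an Ext-progenerator $\widetilde P$ and, for each $DX$, a conflation $0\to K\to \widetilde P_{DX}\to DX\to 0$ inside the dual heart.
\item Set $J=D\widetilde P$. Since $D$ is exact and contravariant, and both hearts carry the inherited exact structures, $\Ext^1_{\mathcal H_{T,U}}(X,J)\cong\Ext^1(\widetilde P,DX)=0$. Dualising the conflation in step (3) gives the inflation $X\to J^{\oplus a}$ with cokernel $DK$ in the heart. Enlarging $a$ by a summand of $J$ if needed gives $a\ge1$, unless the heart is zero; that case is trivial under the paper's conventions with $J=0$.
\item Show $\operatorname{Inj}\mathcal H_{T,U}=\add J$. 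If $I$ is Ext-injective, the conflation $0\to I\to J^{\oplus a}\to Y\to0$ splits, so $I\in\add J$. The reverse inclusion holds by step (4).
\end{enumerate}
Since $J^{\oplus a}$ is a finite sum and the map is an inflation, $J$ is a finite-sum admissible cogenerator in the sense of \cite[Definition 4.13]{GLZ}.

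\textbf{Expected obstacle.} The main difficulty is step (2): checking that the duality really sends the heart of $[\Fac T,\Fac U]$ to a heart of the same form with support $\tau$-tilting endpoints. The point to verify is that the upper and lower roles swap correctly, so that $\Fac U$ becomes a torsion-free condition on the dual side and $T^\perp$ becomes a torsion condition. This is where functorial finiteness of both endpoints is essential. If this identification turns out to be delicate, we would instead appeal directly to the dual statement \cite[Lemma 4.17]{ES} for torsion-free classes.
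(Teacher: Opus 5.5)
Your proposal follows the paper's proof. You dualize the heart to an interval heart over $A^{\op}$ whose endpoints are support $\tau$-tilting, using functorial finiteness of $T^\perp$ and $U^\perp$ together with Adachi--Iyama--Reiten. You take $J$ to be $D$ of the Enomoto--Sakai Ext-progenerator there, dualize projective conflations into inflations into $\add J$, and pad with a complement to reach $J^{\oplus a}$. Finally you get $\operatorname{Inj}\mathcal H_{T,U}=\add J$ by splitting.

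There is one slip to fix in steps (2)--(3).
\begin{itemize}
\item With $\Fac\widetilde U=D(T^\perp)\supseteq D(U^\perp)=\Fac\widetilde T$, dualizing the torsion pair $(\Fac U,U^\perp)$ gives $D(\Fac U)=(\Fac\widetilde T)^{\perp}=\widetilde T^{\perp}$.
\item Hence $D(\mathcal H_{T,U})=\Fac\widetilde U\cap\widetilde T^{\perp}$, and the Ext-progenerator theorem must be applied to $\widetilde T\le\widetilde U$. The progenerator is then $\widetilde U/\tr_{\widetilde T}\widetilde U$.
\item As you wrote it, $\Fac\widetilde T\cap\widetilde U^{\perp}=0$, because $\Fac\widetilde T\subseteq\Fac\widetilde U$. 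Also, the pair $[\widetilde U,\widetilde T]$ does not satisfy the hypothesis $\widetilde U\le\widetilde T$.
\end{itemize}

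Your opening description via the interval $[\Fac DU',\Fac DT']$ already has the roles right. So does your formula $J_0=t_{\Fac U}(T')$ with $\Sub T'=T^\perp$. That formula is a correct explicit description of $J$ which the paper does not state: it is $D\bigl(DT'/\tr_{DU'}DT'\bigr)$, obtained by dualizing the torsion sequence of $DT'$.
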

\begin{proof}
By \cite[Proposition 1.1]{AIR}, the torsion-free classes $T^\perp$ and
$U^\perp$ are functorially finite. The standard duality $D$ gives
functorially finite torsion classes
\[
D(U^\perp)\subseteq D(T^\perp)
\quad\text{in }\modu A^{\op}.
\]
By \cite[Theorem 2.7]{AIR}, choose basic support $\tau$-tilting
$A^{\op}$-modules $T'$ and $U'$ with
\[
\Fac T'=D(U^\perp),\qquad\Fac U'=D(T^\perp).
\]
Dualizing the torsion pair $(\Fac U,U^\perp)$ yields
$D(\Fac U)=(D(U^\perp))^\perp$. Hence
\[
D(\mathcal H_{T,U})
=D(T^\perp)\cap(D(U^\perp))^\perp
=\Fac U'\cap(T')^\perp
=\mathcal H_{T',U'}.
\]

By \cite[Lemma 4.17 and Corollary 4.19]{ES}, the module
$U'/\tr_{T'}U'$ is an Ext-progenerator of this dual heart. Choose a
basic module $P'$ with the same additive closure and put $J=DP'$.
Duality makes $J$ Ext-injective in $\mathcal H_{T,U}$. For
$X\in\mathcal H_{T,U}$, a projective conflation
\[
0\longrightarrow K\longrightarrow P'_X\longrightarrow DX\longrightarrow0,
\qquad P'_X\in\add P',
\]
dualizes to a conflation $0\to X\to DP'_X\to DK\to0$ in the original
heart. Choose a complement $J'_X\in\add J$ with
$DP'_X\oplus J'_X\cong J^{\oplus a}$ for some $a\ge1$. Adding the
split conflation $0\to0\to J'_X\xrightarrow{1}J'_X\to0$ gives the
conflation asserted in the proposition, with $Y=DK\oplus J'_X$ still
in the heart.

All objects of $\add J$ are injective. Conversely, the asserted
conflation splits when $X$ is injective, so every injective object
belongs to $\add J$.
\end{proof}

Now, we apply Theorem~4.14 in \cite{GLZ} to the heart $\mathcal H_{T,U}$.

\begin{corollary}\label{prop:cogenerator}
For the object $J$ in Proposition~\ref{prop:heart-cogenerator},
\[
\ddell_{[T,U]}J
\le\operatorname{glo}\text{-}\ddell[T,U]
\le\ddell_{[T,U]}J+1.
\]
The following conditions are equivalent:
\begin{enumerate}
\item $\ddell_{[T,U]}J<\infty$;
\item $\operatorname{glo}\text{-}\ddell[T,U]<\infty$;
\item $\ddell_{[T,U]}X<\infty$ for every $X\in\mathcal H_{T,U}$.
\end{enumerate}
\end{corollary}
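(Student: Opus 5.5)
The plan is to check that $\mathcal H_{T,U}$, with its inherited exact structure, satisfies the hypotheses of \cite[Theorem 4.14]{GLZ}, using $J$ from Proposition~\ref{prop:heart-cogenerator} as the finite-sum admissible cogenerator. The conclusions then follow directly from that theorem. The hypotheses needed are these. The heart is Hom-finite, Krull--Schmidt and idempotent complete, which holds because it is extension closed and summand closed in $\modu A$. It has enough projectives, by Theorem~\ref{thm:ext-progenerator}. It admits a finite-sum admissible cogenerator, by Proposition~\ref{prop:heart-cogenerator}.

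I would still sketch the mechanism, to make the result self-contained. The left inequality $\ddell_{[T,U]}J\le\operatorname{glo}\text{-}\ddell[T,U]$ holds by definition, since $J\in\mathcal H_{T,U}$.

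For the right inequality, fix $X$ and take the conflation $0\to X\to J^{\oplus a}\to Y\to 0$. Write $m=\ddell_{[T,U]}J$; a finite direct sum of witness sequences is again a witness, so $\ddell_{[T,U]}J^{\oplus a}\le m$. Apply the horseshoe-type construction behind Lemma~\ref{lem:horseshoe}: form the pullback along a projective deflation $P_Y\to Y$. This gives a conflation $0\to\Omega Y\to X\oplus P_Y\to J^{\oplus a}\to0$, up to projective summands. Dually, $X$ is an extension of $J^{\oplus a}$ by a syzygy-type object. The GLZ argument then builds an admissibly exact witness for $X$ by shifting the witness for $J^{\oplus a}$ one position. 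The degree bump $(k+i)\to(k+i+1)$ is absorbed by the monotonicity $k\text{-}\dell\le(k+1)\text{-}\dell$, which is implicit in the proof of Lemma~\ref{lem:basic}. This costs at most one additional step, so $\ddell_{[T,U]}X\le m+1$. Taking the supremum over $X$ gives $\operatorname{glo}\text{-}\ddell[T,U]\le\ddell_{[T,U]}J+1$.

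The equivalences follow immediately. (1)$\Rightarrow$(2) is the right inequality. (2)$\Rightarrow$(3) is trivial. (3)$\Rightarrow$(1) is the special case $X=J$.

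The main obstacle is to make sure the splicing in the GLZ proof uses only admissible morphisms. In $\mathcal H_{T,U}$, cokernels of monomorphisms need not lie in the heart, so every object produced by the pullback or splicing must be shown to belong to the heart. I would handle this by citing the statement of \cite[Theorem 4.14]{GLZ} verbatim for exact categories. The only inputs needed are conflations, which are supplied with third term $Y\in\mathcal H_{T,U}$ by Proposition~\ref{prop:heart-cogenerator}, and pullbacks of deflations, which exist in any exact category. Both of these stay in the heart.
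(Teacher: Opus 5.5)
Your main argument is the same as the paper's: you verify that $\mathcal H_{T,U}$ is a Hom-finite, Krull--Schmidt exact category with enough projectives, take the finite-sum admissible cogenerator from Proposition~\ref{prop:heart-cogenerator}, and invoke \cite[Theorem~4.14]{GLZ}. That is sufficient, and your derivation of the three equivalences from the two inequalities is correct. However, the ``self-contained'' sketch of the right-hand inequality does not work as written, so you should not rely on it.

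There are two problems with the sketch. First, your pullback along $P_Y\to Y$ gives $0\to\Omega Y\to X\oplus P_Y\to J^{\oplus a}\to0$. Here $J^{\oplus a}$ is a \emph{quotient} of $X\oplus P_Y$, so a witness resolving $J^{\oplus a}$ cannot be shifted into one resolving $X$. Pulling that witness back only creates a new position-zero term, an extension of $C_0$ by $\Omega Y$, whose $1$-delooping level you cannot control. Second, the monotonicity goes the wrong way. Moving $C_i$ from position $i$ to position $i+1$, with budget $(m+1)-(i+1)=m-i$, requires $(i+2)\text{-}\dell_{[T,U]}C_i\le m-i$. You only know $(i+1)\text{-}\dell_{[T,U]}C_i\le m-i$, and $k\text{-}\dell\le(k+1)\text{-}\dell$ gives no help in that direction. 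This is why every shift argument in the paper (Corollary~\ref{thm:object}, Theorem~\ref{thm:canonical}) starts from a witness for a $(k+1)$-level.

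A correct self-contained argument pulls back in the other direction.
\begin{itemize}
\item Take a projective deflation $P_0\onto J^{\oplus a}$ with kernel $\Omega_{T,U}J^{\oplus a}$, and pull it back along the inflation $X\to J^{\oplus a}$. This gives a conflation $0\to\Omega_{T,U}J^{\oplus a}\to E\to X\to0$ in the heart.
\item $E\cong\ker(P_0\to Y)$ is the kernel of the composite deflation $P_0\to J^{\oplus a}\to Y$. So $E$ is a relative first syzygy of $Y$, and $\dell_{[T,U]}E=0$.
\item By Lemma~\ref{lem:horseshoe}, $2\text{-}\ddell_{[T,U]}\Omega_{T,U}J^{\oplus a}\le\ddell_{[T,U]}J^{\oplus a}\le m$.
\item Splice a witness for this behind $E$. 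Its terms land in positions $i+1$ with exactly the required bounds $(2+i)\text{-}\dell\le(m+1)-(i+1)$. Hence $\ddell_{[T,U]}X\le m+1$.
\end{itemize}
This also shows why the estimate is only first order: for $k\ge2$, the level $k\text{-}\dell_{[T,U]}E$ need not vanish.
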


\begin{remark}\label{rem:cogenerator-scope}
The estimate and equivalences above are direct applications of the
exact-category theory of \cite{GLZ};
Proposition~\ref{prop:heart-cogenerator} verifies their hypothesis for
the present support $\tau$-tilting intervals. They control the
first-order global invariant. The finite-test comparison in
Theorem~\ref{thm:canonical} involving
$\operatorname{glo}\text{-}(k+1)\text{-}\ddell[T,U]+1$ involves
$\operatorname{glo}\text{-}2\text{-}\ddell[T,U]$ at $k=1$, so the
estimate in Corollary~\ref{prop:cogenerator} alone does not yield the
corresponding finite-test bound for a general interval. If the heart
is wide, Theorem~\ref{thm:wide} and the finitistic-dimension comparison
in Theorem~\ref{thm:canonical} do give
\[
\Findim B^{\op}\le\ddell^{\mathrm{XZ}}[T,U]
=\ddell B\le\ddell_{[T,U]}J+1.
\]
\end{remark}

\subsection{Vanishing of second extensions}

\begin{proposition}[Second extensions]\label{prop:second-ext}
Suppose $\Ext_A^2(X,Y)=0$ for every $X,Y\in\mathcal H_{T,U}$.
Then all first relative syzygies are relatively projective and, for
every $k\ge1$,
\[\operatorname{glo}\text{-}k\text{-}\ddell[T,U]\le1,\qquad k\text{-}\ddell^{\mathrm{XZ}}[T,U]
\le k\text{-}\dell^{\mathrm{XZ}}[T,U]\le2.
\]
In particular, $\operatorname{gl.dim}B\le2$. The above conclusions
hold whenever $A$ is hereditary.
\end{proposition}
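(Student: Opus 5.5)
The plan is to first show that every first relative syzygy is relatively projective, and then read off all the numerical bounds from the definitions.

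\emph{Step 1: first syzygies are projective.} Let $X\in\mathcal H_{T,U}$ and take the minimal conflation $0\to\Omega_{T,U}X\to P_X\to X\to0$ from Theorem~\ref{thm:ext-progenerator}. For any $Y\in\mathcal H_{T,U}$, apply $\Hom_A(-,Y)$ to get the exact segment
\[
\Ext^1_A(P_X,Y)\to\Ext^1_A(\Omega_{T,U}X,Y)\to\Ext^2_A(X,Y).
\]
The left term vanishes because $P_X$ is Ext-projective in the heart. The right term vanishes by hypothesis. Hence $\Ext^1_A(\Omega_{T,U}X,Y)=0$ for all $Y$ in the heart, so $\Omega_{T,U}X$ is Ext-projective, that is, $\Omega_{T,U}X\in\add P$. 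The inherited exact structure is given by $\Ext^1_A$ restricted to the heart, since the heart is extension closed; this justifies the identification of relative Ext with $\Ext^1_A$.

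\emph{Step 2: $\operatorname{gl.dim}B\le2$.} By Proposition~\ref{prop:resolving}, every $B$-module $M$ has $\Omega_BM\cong F(X_M)$ with $X_M$ in the heart. Then $\Omega_B^2M\cong\Omega_BF(X_M)\cong F(\Omega_{T,U}X_M)$ by Proposition~\ref{prop:yoneda}, and this is projective by Step 1. Hence $\operatorname{pd}M\le2$.

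\emph{Step 3: the global bound.} For $X$ in the heart, use the admissibly exact sequence $0\to\Omega_{T,U}X\to P_X\to X\to0$ with $t=1$, $C_0=P_X$ and $C_1=\Omega_{T,U}X$. Both terms are relatively projective, so their ordinary levels are $0$ for every order. In particular
\[
k\text{-}\dell C_0=0\le1,\qquad (k+1)\text{-}\dell C_1=0\le0.
\]
Thus $m=1$ is admissible, and $k\text{-}\ddell_{[T,U]}X\le1$. Taking the supremum gives $\operatorname{glo}\text{-}k\text{-}\ddell[T,U]\le1$.

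\emph{Step 4: the finite-test bound.} The first inequality, $k\text{-}\ddell^{\mathrm{XZ}}\le k\text{-}\dell^{\mathrm{XZ}}$, is already contained in Theorem~\ref{thm:canonical}. For the bound $2$, we bound the ordinary levels of the test objects.
\begin{itemize}
\item For any $X$ in the heart, $\Omega_{T,U}X$ is projective, so $\Omega_{T,U}X$ is stably zero. It is therefore trivially a stable retract of $\Omega^{1+k}_{T,U}Y$ for, say, $Y=X$. Hence $k\text{-}\dell_{[T,U]}X\le1$ for all $k$, and so $\sup_{i\in I}k\text{-}\dell_{[T,U]}S_i\le1$.
\item For the radical terms, $(k+1)\text{-}\dell_{[T,U]}N_j\le1$, which gives $1+(k+1)\text{-}\dell_{[T,U]}N_j\le2$.
\end{itemize}
Together these give $k\text{-}\dell^{\mathrm{XZ}}[T,U]\le2$.

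\emph{Step 5: the hereditary case.} If $A$ is hereditary, then $\Ext^2_A$ vanishes identically, so the hypothesis holds automatically.

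The only genuinely non-formal point is Step 1, namely the identification of Ext-projectivity in the heart with vanishing of $\Ext^1_A(-,Y)$ for $Y$ in the heart. That identification is exactly how Ext-projectives are defined here, so I expect no real obstacle.
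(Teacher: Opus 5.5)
Your proposal is correct and follows the paper's proof. The key step is the same exact sequence $\Ext^1_A(P_X,Y)\to\Ext^1_A(\Omega_{T,U}X,Y)\to\Ext^2_A(X,Y)$, which puts first relative syzygies in $\add P$; the level bounds are then read off from the definitions, where the paper passes through $k\text{-}\ddell\le k\text{-}\dell\le1$ via Lemma~\ref{lem:basic} and you exhibit the length-one witness directly. The only real divergence is that you derive $\operatorname{gl.dim}B\le2$ yourself from $\Omega_B(\modu B)\subseteq\operatorname{Im}F$ and $F(\Omega_{T,U}X)\cong\Omega_BF(X)$, whereas the paper cites \cite[Proposition 5.6]{CXZ}; your argument is valid and makes that step self-contained.
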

\begin{proof}
Take a conflation $0\to K\to P_0\to X\to0$ in the heart, where $P_0\in\mathrm{add}P$. For each
$Y\in\mathcal H_{T,U}$, we have the exact sequence
\[
\Ext_A^1(P_0,Y)\longrightarrow\Ext_A^1(K,Y)
\longrightarrow\Ext_A^2(X,Y).
\]
Thus $K$ is Ext-projective in the heart and lies
in $\add P$. Every $X\in\mathcal H_{T,U}$ has relative projective dimension
at most one, so every ordinary $k$-level is at most one; the derived
bound follows. The finite-test definition gives the bound two. By
\cite[Proposition 5.6]{CXZ}, $\operatorname{gl.dim}B\le2$.
\end{proof}

\section{Examples}\label{sec:examples}

\subsection{A semisimple wide interval}

\begin{example}\label{ex:wide-benchmarks}
Let $A=\kk(1\to2)$, $T=S_2$ and $U=A$. The module $T$ is support
$\tau$-tilting. Since $\Hom_A(S_2,X)\cong X_2$,
\[
\mathcal H_{T,U}=\add S_1,\qquad P_{T,U}=S_1,\qquad B_{T,U}\cong\kk.
\]
Indeed, $\tr_{S_2}P_1=S_2$ and $\tr_{S_2}P_2=P_2$. The heart is wide
and semisimple. Every object is relatively projective, so for all $k\ge1$
\[
k\text{-}\ddell^{\mathrm{XZ}}[T,U]
=\operatorname{glo}\text{-}k\text{-}\ddell[T,U]=0.
\]
The corresponding ordinary levels are also zero.
\end{example}

\subsection{The radical correction term is necessary}

\begin{example}\label{ex:radical-boundary}
Let $A=\kk(1\to2)$, $T=0$ and $U=P_1\oplus S_1$.
The module $U$ is tilting and
\[
\mathcal H_{0,U}=\Fac U=\add(P_1\oplus S_1),\qquad P_{0,U}=U.
\]
The heart is ICE-closed, but not wide: the kernel $S_2$
of $P_1\onto S_1$ is outside the heart. Every intrinsic level is zero.
The algebra $B=\End_A(U)$ is hereditary of type $A_2$.
Write $Q_1=F(P_1)$ and $Q_2=F(S_1)$. Then $Q_1$ is simple projective
and $\rad Q_2\cong Q_1$. Only $\topm Q_1$ is in the image; the radical
lift for the other top is $N_2=P_1$. Therefore, for $k\ge1$,
\[
k\text{-}\ddell^{\mathrm{XZ}}[0,U]
=k\text{-}\dell^{\mathrm{XZ}}[0,U]
=k\text{-}\ddell B=1.
\]
\end{example}

\subsection{A non-ICE interval over a connected algebra attaining the bound}

\begin{example}\label{ex:sharp}
Let $A=\kk(1\to2)$, $U=A$ and $T=S_1$. Then $T$ is support
$\tau$-tilting over the quotient at vertex $1$, $\Hom_A(S_1,A)=0$, and
\[
\mathcal H_{T,U}=S_1^\perp=\operatorname{proj}A
=\add(P_1\oplus S_2),\qquad P=A,\quad B\cong A.
\]
The heart is split exact, so $\operatorname{glo}\text{-}k\text{-}\ddell[T,A]=0$
for every $k\ge1$. The simple $S_2$ is in the image, whereas $S_1$ is not,
and its radical lift is $N_1=S_2$. Hence
$k\text{-}\ddell^{\mathrm{XZ}}[T,A]=1$.
The hereditary algebra has $\Findim A^{\op}=1$, so the finitistic-dimension
bound in Theorem~\ref{thm:canonical} gives $k\text{-}\ddell A\ge1$;
the length-one projective resolutions give the reverse bound. Thus
\[
\Findim B^{\op}=\ddell B=\ddell^{\mathrm{XZ}}[T,A]
=1=\operatorname{glo}\text{-}2\text{-}\ddell[T,A]+1.
\]
The inclusion $S_2\into P_1$ has cokernel $S_1$ outside the heart.
This connected non-ICE example shows that the additive constant one in
the global upper bound of Theorem~\ref{thm:canonical} cannot be removed
in general.
\end{example}
\medskip
\noindent\textbf{Disclosure of computational assistance.}\quad
ChatGPT was used during revision to assist with
literature searches, language editing, and manuscript preparation.

\medskip
\noindent\textbf{Acknowledgements.}\quad
Weiqing Cao is supported by the
National Natural Science Foundation of China (Grant No.~12601071).

\medskip
\noindent\textbf{Data availability.}\quad
Data sharing is not applicable to this article, as no datasets were
generated or analysed during the current study.

\end{document}